\newtheorem{Theoreme}{Theorem}
\newtheorem{Definition}{Definition}[section]
\newtheorem{Proposition}{Proposition}[section]
\newtheorem{Lemme}{Lemma}[section]
\newtheorem{Remarque}{Remark}[section]
\numberwithin{equation}{section}
\newenvironment{proof}{\paragraph{Proof.}}{\hfill$\square$}
\def\vf{f}
\def\vg{\vec{g}}
\def\vn{\vec{\nabla}}
\def\Rt{\mathbb{R}^n}
\def\R{\mathbb{R}}
\def\mP{\mathcal{P}}
\def\mPl{\mathcal{P}^{\text{log}}}
\title{\bf On variable Lebesgue spaces and generalized
nonlinear heat 
equations}
\author{Gast\'on Vergara-Hermosilla\footnote{\emph{gaston.vergarahermosilla@univ-evry.fr}   } }
\affil{\footnotesize LaMME, Univ. Evry, CNRS, Universit\'e Paris-Saclay, 91025, Evry, France.}
\begin{document}

	\maketitle

	\begin{abstract}  
In this work  we address some questions concerning  
the Cauchy problem
for a generalized nonlinear
heat equations considering as functional framework the variable Lebesgue spaces  $L^{p(\cdot)}(\R^n)$.
More precisely, by mixing some structural properties of these spaces with decay estimates of the fractional heat kernel, we were able to prove two well-posedness results for these equations.  In a first theorem, we prove the existence and uniqueness of global-in-time  mild solutions in the mixed-space $\mathcal{L}^{p(\cdot)}_{   \frac{nb}{2\alpha -
\langle 1 \rangle_\gamma
}      }
(\mathbb{R}^n,L^\infty([0,T[ ))$. On the other hand,  by introducing a new class of variable exponents, we demonstrate the existence of an unique local-in-time mild solution  in the space $L^{p(\cdot)} \left( [0,T], L^{q} (\R^n) \right)$.


\end{abstract}

\section{Introduction}

\subsection{General setting}
In this paper we study the Cauchy problem  for the generalized nonlinear heat equations 
\begin{equation}\label{NS_Intro}
\begin{cases}
\partial_tu+  (-\Delta)^{\alpha} u
= F(u)
+  {f}, &  (t,x)\in \ ]0,+\infty[\times \R^n \\[4pt]
u(0,x)=u_0(x),  &  x\in \mathbb{R}^n,
\end{cases}
\end{equation}
where $u:[0,+\infty[\times \mathbb{R}^n \longrightarrow \mathbb{R}$, 
and ${f}:[0,+\infty[\times \mathbb{R}^n \longrightarrow \mathbb{R}$ is a given external force. 
Equation (\ref{NS_Intro}) possess two main features, on the one hand
we have involved 
a nonlinear term given by 
\begin{equation}\label{nonlinearity}
    F(u) = |u|^b u
\quad \text{or}\quad 
F(u) = \vec{1}\cdot \vn(|u|^b u ),
\end{equation}
with $b\in \mathbb{N}\setminus \{0\}$ and $\vec{1}:=(1,1,...,1)\in \mathbb{R}^n $, and on the other hand
we considered
the fractional Laplacian operator $(-\Delta)^{\alpha}$  in the diffusion term. Recall that this operator is defined at  the Fourier level by the symbol $|\xi|^{2\alpha}$, whereas in the spatial variable we have 
\[ (-\Delta)^{\alpha} u (t,x)= C_{\alpha}\, {\bf p.v.} \int_{\Rt} \frac{u (t,x)- u (t,y)}{|x-y|^{3+2\alpha}} dy,  \]
where $C_\alpha>0$ is a constant depending on $\alpha$, and  ${\bf p.v.}$ denotes the principal value.
\\

The evolution equation \eqref{NS_Intro} models two of the classical equations in the literature; the generalized semi-linear  power dissipative equation and the generalized convection-diffusion equation. 
The case $\alpha=1$ 
corresponds to the well-known  classical semi-linear heat equation and
has been widely studied by many authors, where we highlight the papers \cite{hiroshifujita,Matosterraneo,ponce,terraneo02,Weissler}   and the classical book \cite{MR3967048}. 
In particular, the case with $\alpha=1$, nonlinear term  $
\vec{1}\cdot \vn(|u|^b u )$,  and $b=1$, is of special interest as it
can be interpreted as  an scalar toy model for the Navier-Stokes equations \cite{MR3808277,MR3983288,MR3469428,MR3308920}. 
Regarding more general cases,   
in \cite{MR1844433,MR1663000} the authors proved for $\alpha\in \mathbb{N}$ 
the existence and uniqueness of strong solutions of \eqref{NS_Intro}
considering as functional setting the classical Lebesgue spaces.
On the other hand, in \cite{MR2290390} the authors provide a global well-posedness result in
the case of $\alpha\geq 1$
considering small initial data in pseudomeasure spaces. \\

In this paper we address questions concerning the existence and uniqueness of solutions for equations (\ref{NS_Intro}) 
considering as functional framework the Lebesgue spaces of variable exponent $L^{p(\cdot)}(\R^n)$.
To the best of our knowledge, this kind of functional spaces have not been considered previously in the analysis of  semi-linear power dissipative type equations nor  convection-diffusion type equations. 
Roughly speaking, the  spaces $L^{p(\cdot)}(\R^n)$ can be interpreted as a natural generalization of the classical Lebesgue spaces $L^p$, in the sense that, 
the usual constant parameter $p\in[1,+\infty[$ is replaced now by an appropriate function $p(\cdot):\R^n\longrightarrow [1,+\infty[$. 
However, as we will see in the following lines, in the rigorous definition of the variable Lebesgue spaces there are subtle issues that make them very different of the classical ones. 
To define the spaces  $L^{p(\cdot)}$, we consider a measurable function $\vf:\R^n\longrightarrow \R$, and the \emph{modular function} $m_{p(\cdot)}$ associated to $p(\cdot)$, which is defined by 
\begin{equation}\label{Def_Modular_Intro}
m_{p(\cdot)}(\vf)=\int_{\R^n}|\vf(x)|^{p(x)}dx.
\end{equation}
At this stage of the construction, is illustrative to see that, if we consider  $p(\cdot)\equiv p\in [1,+\infty[$, we can define the classical $L^p$-norm, and thus the $L^p$-space, by considering the expression 
$$\|\vf\|_{L^{p}(\R^n)}=\left(
 m_{p}(\vf)
 \right)^{\frac1p}.$$
In this point appears an important issue; if we consider a measurable function $p(\cdot)$ non constant, 
the previous formula does not have sense (because the exponent outside the integral). 
Then, the classical strategy to avoid this difficulty is to equip to the space $L^{p(\cdot)}$
with the following \emph{Luxemburg norm}:  
\begin{equation}\label{Def_LuxNormLebesgue}
\|\vf\|_{L^{p(\cdot)} (\R^n) }=\inf\left\{\lambda > 0: \, 
m_{p(\cdot)}\left( \frac{f}{\lambda} \right)
=
\int_{\R^n}\left|\frac{\vf(x)}{\lambda}  \right|^{p(x)}dx 
\leq1 \right\}.
\end{equation}
Thus, we define the variable Lebesgue spaces
$L^{p(\cdot)}(\R^n)$, as the set of measurable functions such that  quantity  $\|\cdot\|_{L^{p(\cdot)}} (\R^n)$ is finite.
For a complete presentation of the theory of variable Lebesgue spaces we refer to the interested reader to  the books  \cite{Cruz_Libro},  \cite{Diening_Libro} and \cite{BookALEX}. \\

With this information at hand, our objective in this paper is to provide a first application of the variable Lebesgue spaces to the analysis of the generalized semi-linear  power dissipative equation and the generalized convection-diffusion equation. For doing this, we present an unified approach which deals both cases. More precisely, we will construct solutions for the Cauchy problem \eqref{NS_Intro} provided with initial data and external forces in appropriate variable Lebesgue spaces. The presentation of our main results motivate the next subsection.

\subsection{Presentation of the results} 

In this subsection we state our main results about the existence and uniqueness of mild solution of the Cauchy problem for the generalized nonlinear heat equation \eqref{NS_Intro} on  variable Lebesgue spaces. 
Thus, in order to distinguish appropriately the different cases involved in \eqref{nonlinearity}, in the following we introduce some  useful notations. \\

\noindent Given a constant $s\in\R$, we denote by $    \langle
s
\rangle_\gamma$ to the quantity given by 
\begin{equation}\label{notation1}
    \langle
s
\rangle_\gamma 
 =
 \begin{cases}
0
   & \text{if } \gamma=0,
    \\[7pt]
  \displaystyle 
s
   & \text{if } \gamma=1.
\end{cases}
\end{equation}
Similarly, given the operators $\text{Id}(\cdot)$ and $ \vec{1}\cdot \vn (\cdot)$, 
we denote by $\vn^{\gamma}(\cdot)$ to the operator defined by the expression 
\begin{equation}\label{notation2}
\vn^{\gamma}(\cdot)
 =
 \begin{cases}
\text{Id}(\cdot)
   & \text{if } \gamma=0,
    \\[7pt]
  \displaystyle 
  \vec{1}\cdot
\vn (\cdot)
   & \text{if } \gamma=1.
\end{cases}
\end{equation}

With these notations fixed, let us proceed with the presentation of our first main result, which state 
the existence and uniqueness of a global mild solution of the Cauchy problem for the equations \eqref{NS_Intro} 
in the framework of variable Lebesgue spaces
as long the external force and the initial data  are sufficiently small.  This first theorem reads as follows. 

\begin{Theoreme}\label{Theoreme_2}
Let $\gamma \in \{0,1\}$ fixed. 
Consider $\alpha\in ]\frac 1 2, 1]$, a variable exponent $p(\cdot)\in \mathcal{P}^{\log}(\R^n)$ such that $p^->1$, an initial data $u_0\in \mathcal{L}^{p(\cdot)}_{   \frac{nb}{2\alpha - 
\langle 1 \rangle_\gamma
}      }(\R^n)$,
and let $f$ be an external force such that $\vf= \vn^{\gamma} ( \mathcal{F})$ where $\mathcal{F}$ is a function in  $ \mathcal{L}^{\frac{p(\cdot)}{b+1}}_{\frac{nb}{(b+1)(2\alpha
-
\langle 1 \rangle_\gamma
)}}(\R^n, L^\infty([0,T[ ))$.
If  $\|u_0\|_{\mathcal{L}^{p(\cdot)}_{   \frac{nb}{2\alpha -
\langle 1 \rangle_\gamma
}        }} +
\|\mathcal{F}\|_{\mathcal{L}^{\frac{p(\cdot)}{b+1}}_{\frac{nb}{(b+1)(2\alpha-
\langle 1 \rangle_\gamma
)},x}(L^\infty_t)}$ is small enough, then the equation (\ref{NS_Intro}) admits a unique global mild solution in the space 
$\mathcal{L}^{p(\cdot)}_{   \frac{nb}{2\alpha -
\langle 1 \rangle_\gamma
}      }
(\mathbb{R}^n,L^\infty([0,T[ ))$.
\end{Theoreme}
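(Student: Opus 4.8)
The plan is to recast \eqref{NS_Intro} in its mild (Duhamel) form and solve it by a contraction argument in a closed ball of the mixed space $\mathcal{L}^{p(\cdot)}_{\theta}(\Rn,L^\infty([0,T[))$, where $\theta=\frac{nb}{2\alpha-\langle 1\rangle_\gamma}$. Writing $e^{-t(-\Delta)^\alpha}$ for the fractional heat semigroup (convolution against the $\alpha$-stable kernel $K_t$), a mild solution is a fixed point of
\[
\Phi(u)(t)=e^{-t(-\Delta)^\alpha}u_0+\int_0^t e^{-(t-s)(-\Delta)^\alpha}\,\vn^{\gamma}\!\bigl(|u|^b u+\mathcal{F}\bigr)(s)\,ds ,
\]
since $\vn^{\gamma}$ is linear and $f=\vn^{\gamma}(\mathcal{F})$. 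The exponent $\theta$ is exactly the one singled out by the scaling invariance $u_\lambda(t,x)=\lambda^{(2\alpha-\langle 1\rangle_\gamma)/b}u(\lambda^{2\alpha}t,\lambda x)$ of the equation; working at this critical weight is what will make the estimates below independent of $T$ and hence yield a global solution.

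Next I would prove the three estimates that make $\Phi$ a self-map of the ball $\overline{B}(0,R)$ for small $R$. (i) For the free evolution, $\|e^{-t(-\Delta)^\alpha}u_0\|_{\mathcal{L}^{p(\cdot)}_\theta(L^\infty_t)}\lesssim\|u_0\|_{\mathcal{L}^{p(\cdot)}_\theta}$; this rests on the convolution representation together with the boundedness of the associated maximal/convolution operators on $\lpv$, and it is precisely here that the hypothesis $p(\cdot)\in\mPl$ enters, through the log-H\"older continuity of the exponent. (ii) For the nonlinear term I would combine the kernel decay bounds for $\|K_{t-s}\|$ (resp. $\|\nabla K_{t-s}\|$ when $\gamma=1$) with the generalized H\"older inequality in variable Lebesgue spaces, whose power-rule form $\bigl\||u|^b u\bigr\|_{p(\cdot)/(b+1)}\lesssim\|u\|_{p(\cdot)}^{b+1}$ explains both the exponent $\frac{p(\cdot)}{b+1}$ and the weight $\frac{nb}{(b+1)(2\alpha-\langle 1\rangle_\gamma)}$ imposed on $\mathcal{F}$. (iii) The forcing term is estimated exactly as the nonlinear one, giving a bound by $\|\mathcal{F}\|_{\mathcal{L}^{p(\cdot)/(b+1)}_{\frac{nb}{(b+1)(2\alpha-\langle 1\rangle_\gamma)}}(L^\infty_t)}$.

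Collecting (i)--(iii) yields $\|\Phi(u)\|\le C_0\bigl(\|u_0\|+\|\mathcal{F}\|\bigr)+C_1R^{b+1}$ on $\overline{B}(0,R)$. A parallel computation using the pointwise inequality $\bigl||u|^b u-|v|^b v\bigr|\lesssim(|u|^b+|v|^b)\,|u-v|$ and the same H\"older/kernel machinery produces the Lipschitz bound $\|\Phi(u)-\Phi(v)\|\lesssim R^{b}\,\|u-v\|$. Choosing $R$ small and then imposing the smallness threshold on $\|u_0\|+\|\mathcal{F}\|$ stated in the theorem makes $\Phi$ a contraction of $\overline{B}(0,R)$ into itself, and Banach's fixed-point theorem gives the unique global mild solution.

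The step I expect to be the main obstacle is estimate (ii): transferring the classical fractional-heat-kernel decay into its variable-exponent counterpart and checking that the resulting time singularity $(t-s)^{-\beta}$ is integrable against the scaling weight. In the convection-diffusion case $\gamma=1$ the extra derivative in $\vn^{\gamma}$ contributes an additional factor $(t-s)^{-1/(2\alpha)}$, so integrability forces $\frac{1}{2\alpha}<1$, i.e. $\alpha>\frac12$ --- exactly the standing hypothesis --- and balancing this singularity against $\theta$ while keeping the variable-exponent constants under control (which is where the log-H\"older regularity of $p(\cdot)$ and the modular identities of $\lpv$ must be used together) is the delicate part of the argument.
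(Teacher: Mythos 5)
Your skeleton (Duhamel formulation, three estimates, Banach fixed point in a small ball of $\mathcal{L}^{p(\cdot)}_{\theta}(\Rn,L^\infty([0,T[))$, Lipschitz bound via $\bigl||u|^bu-|v|^bv\bigr|\lesssim(|u|^b+|v|^b)|u-v|$) matches the paper, and your treatment of (i) via maximal-function bounds on $L^{p(\cdot)}$ is exactly Proposition \ref{prop.Control_Uo}. The genuine gap is in step (ii)--(iii), precisely the step you flag as the obstacle. You propose to use semigroup smoothing bounds on $\|K_{t-s}\ast\cdot\|$ and then integrate a time singularity $(t-s)^{-\beta}$; this runs into two problems. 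First, Young's inequality for convolution fails on $L^{p(\cdot)}$ for non-constant exponents (the paper states this explicitly), so the $L^p$--$L^q$ smoothing estimates of Lemma \ref{Lemma 3.1 MIAO} are not available with a variable spatial exponent. Second, the norm of $\mathcal{E}$ puts $L^\infty_t$ \emph{inside} and $L^{p(\cdot)}_x$ outside, so a bound of the form $\sup_t\|\cdot\|_{L^{p(\cdot)}_x}$ obtained by integrating a time singularity does not control $\bigl\|\sup_t|\cdot|\bigr\|_{L^{p(\cdot)}_x}$. The paper's route is different: take $\|u(\cdot,y)\|_{L^\infty_t}$ pointwise in $y$, then integrate the \emph{pointwise} kernel decay of Remark \ref{remark 2.1 MIAO} over $s\in(0,+\infty)$, which produces $|x-y|^{-(n-(2\alpha-\langle 1\rangle_\gamma))}$ and hence a purely spatial Riesz potential $\mathcal{I}_{2\alpha-\langle 1\rangle_\gamma}$ applied to $\|u\|_{L^\infty_t}^{b+1}$; no time singularity ever appears, which is also why the estimates are $T$-independent and the solution is global. (Your intuition that $\alpha>\tfrac12$ is needed is right, but the mechanism is that the order $2\alpha-1$ of the Riesz potential must be positive, not the integrability of $(t-s)^{-1/(2\alpha)}$.)

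The second missing idea is how to return from $L^{p(\cdot)/(b+1)}$ to $L^{p(\cdot)}$ after applying that Riesz potential. The standard variable-exponent estimate (Theorem \ref{theo.PotentialRieszVariable0}) forces $\frac{1}{q(\cdot)}=\frac{1}{p(\cdot)}-\frac{\beta}{n}$, and requiring $\mathcal{I}_{2\alpha-\langle 1\rangle_\gamma}:L^{p(\cdot)/(b+1)}\to L^{p(\cdot)}$ under that rigid relation collapses $p(\cdot)$ to the constant $\frac{nb}{2\alpha-\langle 1\rangle_\gamma}$ (Remark \ref{Rem_Riesz_MixedLebesgue}). The paper circumvents this with the mixed-space estimate of Proposition \ref{Proposition_RieszPotential}, $\|\mathcal{I}_\beta(f)\|_{L^{r(\cdot)}}\leq C\|f\|_{\mathcal{L}^{p(\cdot)}_q}$ with $r(\cdot)=\frac{np(\cdot)}{n-\beta q}$, where the constant companion exponent $q=\frac{nb}{(b+1)(2\alpha-\langle 1\rangle_\gamma)}$ is tuned so that $r(\cdot)=p(\cdot)$; this is the entire reason the mixed spaces $\mathcal{L}^{p(\cdot)}_q$ appear in the statement. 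Your proposal never addresses this exponent rigidity, so as written the contraction estimate in $\mathcal{L}^{p(\cdot)}_\theta(L^\infty_t)$ cannot be closed for a genuinely variable $p(\cdot)$.
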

We must remark  the fact that,  we have studied the behaviour of the mild solution in the time variable by considering the $L^\infty $ space, and thus we have analyzed the information in the space variable  by considering the \emph{mixed variable Lebesgue spaces} $\mathcal{L}^{p(\cdot)}_{r}$ (see Subsection \ref{Secc_Notaciones_Presentaciones} for the definition of these spaces). 
Now, note that the mixed variable Lebesgue spaces considered here by merely technical reasons, and it is motivated by the lack of flexibility in the parameters that intervene in the boundedness of the Riesz transforms involved. In Subsection \ref{Secc_Notaciones_Presentaciones} and Remark \ref{Rem_Riesz_MixedLebesgue} below we will provide precise details on this particular issue. \\

In our second main result we will state the existence and uniqueness of a local mild solution  
for the equations \eqref{NS_Intro}  by considering a Lebesgue space of variable exponent in the time variable, and by setting a classical $L^q$-space in the space variable. This theorem reads as follows. 

\begin{Theoreme}\label{Theoreme_1}
Let $\gamma \in \{0,1\}$ fixed. Consider $\alpha\in ]\frac 1 2, 1],\ p(\cdot )\in \mPl(\Rt)$ with $b+1<p^-\leq p^+ <+\infty$, fix a parameter  
$q>\frac{nb}{2\alpha -  \langle1\rangle_\gamma  }$ by the relationship
$\frac{\alpha b}{p(\cdot)}
+
\frac{nb}{2q}
<\alpha - \langle \frac{1}{2} \rangle_\gamma$ and $\overline{q}(\cdot)\in\mP^{\text{emb}}_q(\Rt)$.
If $\vf\in L^{1} \left( [0,+\infty[,  L^{ \overline{q}(\cdot)  } (\R^n) \right)$ is an external force and if the initial data 
$u_0\in L^{ \overline{q}(\cdot)  } (\Rt)$, 
then there exist a time $0<T<+\infty$ and a unique mild solution of the equation \eqref{NS_Intro} in the space $L^{p(\cdot)} \left( [0,T], L^{q} (\R^n) \right)$.
\end{Theoreme}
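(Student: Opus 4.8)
The plan is to recast \eqref{NS_Intro} in its mild (Duhamel) form and to solve it by a contraction argument in the Banach space $E_T:=L^{p(\cdot)}([0,T],L^{q}(\R^n))$. Denoting by $\mathcal{S}_\alpha(t)=e^{-t(-\Delta)^\alpha}$ the fractional heat semigroup, a mild solution is a fixed point of
\begin{equation*}
\Phi(u)(t)=\mathcal{S}_\alpha(t)u_0+\int_0^t \mathcal{S}_\alpha(t-s)\,\vn^{\gamma}(|u|^b u)(s)\,ds+\int_0^t \mathcal{S}_\alpha(t-s)\,\vf(s)\,ds,
\end{equation*}
which I decompose as $\Phi(u)=u_{\mathrm{lin}}+\mathcal{N}(u)+\mathcal{L}(\vf)$. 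The goal is to show that, for $R$ and $T$ small enough, $\Phi$ maps the closed ball $B_R\subset E_T$ into itself and is a contraction there; the Banach fixed point theorem then delivers the unique mild solution in $E_T$.

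I would first dispatch the two linear terms. For the datum, the hypothesis $\overline q(\cdot)\in\mP^{\text{emb}}_q(\Rt)$ is designed precisely to furnish, via the $L^{\overline q(\cdot)}\!\to L^{q}$ smoothing estimate for $\mathcal{S}_\alpha(t)$ built from the kernel decay, a bound $\norm{\mathcal{S}_\alpha(t)u_0}_{L^{q}}\le C\,t^{-\delta}\norm{u_0}_{L^{\overline q(\cdot)}}$ with $\delta$ small enough that $t\mapsto t^{-\delta}\in L^{p(\cdot)}([0,T])$, placing $u_{\mathrm{lin}}$ in $E_T$. For $\mathcal{L}(\vf)$ I would push the $L^{q}$ norm inside the time integral, bound $\norm{\mathcal{S}_\alpha(t-s)\vf(s)}_{L^{q}}$ by the same smoothing applied to $\vf(s)\in L^{\overline q(\cdot)}$, and combine $\vf\in L^{1}([0,+\infty[,L^{\overline q(\cdot)})$ with the boundedness of the singular time kernel on $L^{p(\cdot)}([0,T])$.

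The heart of the matter is the nonlinear term. Since $|u|^b u$ is homogeneous of degree $b+1$, one has $\norm{|u|^b u(s)}_{L^{q/(b+1)}}=\norm{u(s)}_{L^{q}}^{b+1}$, so the condition $b+1<p^-$ is exactly what makes $s\mapsto\norm{u(s)}_{L^{q}}^{b+1}$ belong to $L^{p(\cdot)/(b+1)}([0,T])$. Applying the gradient kernel estimate $\norm{\vn^{\gamma}\mathcal{S}_\alpha(\tau)g}_{L^{q}}\le C\,\tau^{-\beta}\norm{g}_{L^{q/(b+1)}}$, with $\beta=\frac{1}{2\alpha}\big(\langle 1\rangle_\gamma+\frac{nb}{q}\big)$, reduces $\norm{\mathcal{N}(u)(t)}_{L^{q}}$ to the Riemann--Liouville--type integral $\int_0^t (t-s)^{-\beta}\norm{u(s)}_{L^{q}}^{b+1}\,ds$. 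Taking the $L^{p(\cdot)}([0,T])$ norm in the time variable, I would invoke a variable-exponent fractional-integration (Young-type) estimate for this singular kernel: the hypothesis $\frac{\alpha b}{p(\cdot)}+\frac{nb}{2q}<\alpha-\langle\tfrac12\rangle_\gamma$ is equivalent to $\beta<1-\frac{b}{p(\cdot)}$, so there is a strictly positive gap which both makes the fractional integral bounded from $L^{p(\cdot)/(b+1)}$ into $L^{p(\cdot)}$ and produces a gain $T^{\kappa}$ with $\kappa>0$. This yields $\norm{\mathcal{N}(u)}_{E_T}\le C\,T^{\kappa}\norm{u}_{E_T}^{b+1}$; the pointwise bound $\big||u|^b u-|v|^b v\big|\le C(|u|^b+|v|^b)|u-v|$ together with H\"older in $\R^n$ and the same time estimate gives the companion Lipschitz bound $\norm{\mathcal{N}(u)-\mathcal{N}(v)}_{E_T}\le C\,T^{\kappa}\big(\norm{u}_{E_T}^{b}+\norm{v}_{E_T}^{b}\big)\norm{u-v}_{E_T}$.

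Collecting these estimates, for $T$ small the self-mapping and contraction properties hold on $B_R$, which closes the fixed point argument. The restriction $\alpha\in]\tfrac12,1]$ enters here: in the convection case $\gamma=1$ the gradient contributes the extra singularity $\tau^{-1/(2\alpha)}$, integrable in time precisely because $\frac{1}{2\alpha}<1$. I expect the main obstacle to be the variable-exponent boundedness of the Riemann--Liouville fractional integral with the quantified $T^{\kappa}$ gain: because the time exponent $p(\cdot)$ is genuinely non-constant, the classical Young and Hardy--Littlewood--Sobolev convolution estimates are not directly available, and one must exploit the $\log$-H\"older regularity encoded in $p(\cdot)\in\mPl(\Rt)$ to control the time modular and transfer it into the desired norm estimate.
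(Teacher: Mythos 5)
Your architecture --- Duhamel formulation, fixed point in $E_T=L^{p(\cdot)}([0,T],L^q(\Rn))$, the kernel decay estimates reducing the nonlinearity to a singular time integral against $\|u(s)\|_{L^q}^{b+1}$, and the bookkeeping of where each hypothesis enters --- matches the paper's. But the step you yourself flag as the main obstacle, namely
\begin{equation*}
\Bigl\| \int_0^t (t-s)^{-\beta}\,\|u(s,\cdot)\|_{L^q}^{b+1}\,ds \Bigr\|_{L^{p(\cdot)}([0,T])} \leq C_T\,\|u\|_{E_T}^{b+1},
\qquad \beta=\langle \tfrac{1}{2\alpha}\rangle_\gamma+\tfrac{nb}{2\alpha q},
\end{equation*}
is precisely the content of Proposition \ref{prop.Control_Nolineal}, and you leave it unproven: you ``invoke'' a variable-exponent Young/fractional-integration estimate that is not among the available tools (as the paper itself recalls, Young's convolution inequality genuinely fails for non-constant exponents, so there is no off-the-shelf Riemann--Liouville bound on $L^{p(\cdot)}$). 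The paper closes this step by a different mechanism: it dualizes via the norm conjugate formula \eqref{Norm_conjugate_formula}, uses Fubini to move the singular kernel onto the dual function $\psi$, recognizes $\int_{\R}|t-s|^{-\beta}|\psi(t)|\,dt=\mathcal{I}_{1-\beta}(|\psi|)(s)$ as a one-dimensional Riesz potential, applies the known mapping theorem (Theorem \ref{theo.PotentialRieszVariable0}) on $L^{r(\cdot)}\to L^{\tilde p(\cdot)}$ with $\tfrac{1}{\tilde p(\cdot)}=1-\tfrac{b+1}{p(\cdot)}$, and then uses the strict inequality $\frac{\alpha b}{p(\cdot)}+\frac{nb}{2q}<\alpha-\langle\frac12\rangle_\gamma$ to deduce $r(\cdot)<p'(\cdot)$, hence $L^{p'(\cdot)}([0,T])\subset L^{r(\cdot)}([0,T])$ by Lemma \ref{lemme_embeding}. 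Until you supply an argument of this kind, the self-map and Lipschitz estimates for $\mathcal{N}$ are not established.

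Two further mismatches are worth noting. First, the constant the paper obtains for the nonlinear term is $C(1+T)$, not $C\,T^{\kappa}$: the bounded-domain embedding constant $(1+T)$ does not vanish as $T\to0$, so the smallness that closes the fixed point does not come from the nonlinear estimate at all, but from the linear terms, whose bounds carry the factor $\max\{T^{1/p^-},T^{1/p^+}\}$ and therefore make the radius $R=2B$ small for small $T$; your claimed $T^{\kappa}$ gain would require a separate, more delicate proof in the variable-exponent setting and is not needed. Second, for the initial datum you posit a smoothing estimate $\|\mathcal{S}_\alpha(t)u_0\|_{L^q}\le Ct^{-\delta}\|u_0\|_{L^{\overline q(\cdot)}}$; no such estimate is used or available. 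The class $\mP^{\text{emb}}_q$ is defined exactly so that $L^{\overline q(\cdot)}(\Rn)\subset L^q(\Rn)$ (Lemma \ref{lema embeddd set}), after which the plain bound $\|\mathfrak{g}_t^\alpha * u_0\|_{L^q}\le\|\mathfrak{g}_t^\alpha\|_{L^1}\|u_0\|_{L^q}$ suffices, with no time singularity to integrate.
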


Note that, the variable exponent $\overline{q}(\cdot)$ involved in this theorem belongs to the class $\mP^{\text{emb}}_q(\Rt)$, which characterize the variable Lebesgue spaces embedded in the space $L^q(\Rt)$. We will give a precise notion of this class of exponents in Definition \ref{class of embedding} and Lemma  \ref{lema embeddd set} below.\\ 

To finish this subsection we must mention that the theorems recently presented here seems to be, to the best of our knowledge, are the first applications  of the variable Lebesgue spaces in the analysis of the generalized nonlinear heat equations  \eqref{NS_Intro}. We hope that these results will inform future studies about applications of variable Lebesgue spaces to various evolution PDEs. 

\subsection*{Organization of the paper}
The next subsections are structured as follows. In Section \ref{Secc_resentaciones} we will present a review of the main definitions and properties of  Variable Lebesgue spaces $L^{p(\cdot)}$ and  some decay estimates of fractional heat kernels. Section \ref{Secc_Proof_Existence} is dedicated to the proof of  Theorems \ref{Theoreme_2} and \ref{Theoreme_1}.

\section{Preliminaries}\label{Secc_resentaciones}
With the aim of keeping this article reasonably self-contained,
in this section we will present the key results and definitions involved in the proofs of our main results. 
To start, we present the following classical result which will be the core idea to obtain our mild solutions: {\color{black} 
\begin{Proposition}[Contraction mapping principle]\label{cmp}
Let $X$ be a Banach space, let $Z \subset X$ be a closed bounded subset, and let $\Phi : Z \rightarrow Z$ be Lipschitz continuous in the norm topology with Lipschitz constant $L < 1$, i.e.  given $x,y\in Z$, $\|\Phi(x) - \Phi(y)\| \leq L \|x - y\|$, with $L < 1.$ Then, $\Phi$ has a unique fixed point in $Z$.
\end{Proposition}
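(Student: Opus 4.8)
The plan is to prove this by the method of successive approximations (Picard iteration), exploiting the completeness of $X$ together with the geometric decay forced by the contraction constant $L<1$. First I would fix an arbitrary point $x_0\in Z$ and define a sequence of iterates recursively by $x_{n+1}=\Phi(x_n)$; since $\Phi$ maps $Z$ into $Z$, this sequence is well defined and remains inside $Z$ for every $n$.

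The core step is to show that $(x_n)_{n\in\mathbb{N}}$ is a Cauchy sequence. Applying the Lipschitz hypothesis repeatedly gives the one-step bound $\|x_{n+1}-x_n\|=\|\Phi(x_n)-\Phi(x_{n-1})\|\leq L\|x_n-x_{n-1}\|$, so by induction $\|x_{n+1}-x_n\|\leq L^n\|x_1-x_0\|$. For $m>n$, the triangle inequality together with the summation of the resulting geometric series then yields
\begin{equation*}
\|x_m-x_n\|\leq \sum_{k=n}^{m-1}\|x_{k+1}-x_k\|\leq \left(\sum_{k=n}^{m-1}L^k\right)\|x_1-x_0\|\leq \frac{L^n}{1-L}\,\|x_1-x_0\|.
\end{equation*}
Because $L<1$, the right-hand side tends to $0$ as $n\to\infty$, which is precisely the Cauchy condition.

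Since $X$ is a Banach space, the Cauchy sequence $(x_n)$ converges to a limit $x^\ast\in X$, and since $Z$ is closed this limit satisfies $x^\ast\in Z$. To see that $x^\ast$ is a fixed point, I would pass to the limit in the recursion $x_{n+1}=\Phi(x_n)$: the Lipschitz property makes $\Phi$ continuous, so $\Phi(x_n)\to\Phi(x^\ast)$, while the left-hand side converges to $x^\ast$, giving $x^\ast=\Phi(x^\ast)$. Uniqueness then follows directly from the contraction: if $x^\ast$ and $y^\ast$ are both fixed points, then $\|x^\ast-y^\ast\|=\|\Phi(x^\ast)-\Phi(y^\ast)\|\leq L\|x^\ast-y^\ast\|$, and since $L<1$ this forces $\|x^\ast-y^\ast\|=0$, i.e. $x^\ast=y^\ast$.

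I do not expect a genuine obstacle here, as this is a classical result; the only point requiring care is the geometric-series estimate that upgrades the one-step contraction into the Cauchy property, together with the bookkeeping guaranteeing that the iterates never leave the closed set $Z$. It is also worth noting that the boundedness of $Z$ assumed in the statement is not actually used in the argument: the closedness of $Z$ and the completeness of $X$ already suffice to locate the limit inside $Z$ and to conclude.
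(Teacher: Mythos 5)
Your proof is correct and complete: it is the classical Picard-iteration argument (iterate, geometric decay, Cauchy, completeness of $X$ plus closedness of $Z$, continuity of $\Phi$, uniqueness by contraction), and your observation that boundedness of $Z$ is not actually needed is also accurate. The paper does not prove this proposition itself but simply cites the appendix of a reference for exactly this standard argument, so there is nothing to contrast.
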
 }
\noindent The interested reader can consult the Appendix of the book \cite{bedrossian2022mathematical} for a proof of this proposition.

\medskip

To continue, in the next subsections we present some tools on variable Lebesgue spaces and fractional heat kernels.

\subsection{Variable Lebesgue spaces}\label{Secc_Notaciones_Presentaciones}
We begin this subsection by specifying some of the basic concept involved in the theory of 
variable Lebesgue spaces;
the notions of variable exponent,  set of variable exponents and limit exponents.


\begin{Definition}
    Let  $n\in \mathbb{Z}_{+}$  and  $p:\mathbb{R}^n\longrightarrow [1,+\infty[$ a function.
    We say that  $p(\cdot)$ is a {\it variable exponent} if it is a measurable function. We denote by $\mathcal{P}(\mathbb{R}^n)$ the set of variable exponents, and we define the {\it limit exponents} $p^-$ and $p^+$ as 
    \begin{equation}
        p^-= {\mbox{inf ess}}_{x\in \mathbb{R}^n}  \; \{p(x)\}
        \quad 
        \text{and}
        \quad 
        p^+= {\mbox{sup ess}}_{x\in \mathbb{R}^n} \; \{p(x)\}.
    \end{equation}
\end{Definition}
Given a variable exponent $p(\cdot)$, in the rest of the article we will consider that
$$1<p^-\leq p^+<+\infty.$$ 
In this point we must emphasise the fact that the spaces $L^{p(\cdot)}(\mathbb{R}^n)$  are  Banach function spaces, and thus they possess interesting  and natural properties. 
In the next, we present the   generalization of the H\"older inequalities to this functional setting.
\begin{Lemme}
    Let consider the variable exponents $p_1(\cdot),\,p_2(\cdot),\,p_3(\cdot)\in \mathcal{P}(\mathbb{R}^n)$ such that the following pointwise relationship follows
$\frac{1}{p_1(x)}=\frac{1}{p_2(x)}+\frac{1}{p_3(x)}$, $x\in \mathbb{R}^n$. Then, given $f\in L^{p_2(\cdot)}(\mathbb{R}^n) $ and $g \in L^{p_3(\cdot)}(\mathbb{R}^n)$, the pointwise product $fg$ belongs to the space $L^{p_1(\cdot)}(\mathbb{R}^n)$, and there exists a numerical constant $C>0$ such that 
\begin{equation}\label{Holder_LebesgueVar}
\|f g\|_{L^{p_1(\cdot)}} \leq C\|f\|_{L^{p_2(\cdot)}}\|g\|_{L^{p_3(\cdot)}}.
 \end{equation}
\end{Lemme}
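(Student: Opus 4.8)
The plan is to reduce the norm inequality \eqref{Holder_LebesgueVar} to a pointwise algebraic estimate that can be integrated against the modular $m_{p(\cdot)}$, and then to translate the resulting modular bound back into a bound on the Luxemburg norm. First I would dispose of the trivial cases: if $\|f\|_{L^{p_2(\cdot)}}=0$ or $\|g\|_{L^{p_3(\cdot)}}=0$, then $f$ or $g$ vanishes almost everywhere, so $fg=0$ and \eqref{Holder_LebesgueVar} holds. Otherwise, by the homogeneity of the Luxemburg norm it suffices to treat the normalized functions $f_0 = f/\|f\|_{L^{p_2(\cdot)}}$ and $g_0 = g/\|g\|_{L^{p_3(\cdot)}}$. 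The whole argument thus rests on the \emph{unit ball property} of the norm \eqref{Def_LuxNormLebesgue}, namely that $\|h\|_{L^{p(\cdot)}}\le 1$ is equivalent to $m_{p(\cdot)}(h)\le 1$ (a consequence of the continuity and monotonicity of $\lambda\mapsto m_{p(\cdot)}(h/\lambda)$, which uses $p^+<+\infty$). In particular $\|f_0\|_{L^{p_2(\cdot)}}=1$ gives $m_{p_2(\cdot)}(f_0)\le 1$, and likewise $m_{p_3(\cdot)}(g_0)\le 1$.

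The key pointwise step is a weighted Young inequality adapted to the exponent relation. Fixing $x\in\mathbb{R}^n$ and writing $r=p_1(x)$, $s=p_2(x)$, $t=p_3(x)$ so that $\frac1r=\frac1s+\frac1t$, I would set $s'=s/r$ and $t'=t/r$ and check that $\frac{1}{s'}+\frac{1}{t'}=1$; applying the classical Young inequality $AB\le \frac{A^{s'}}{s'}+\frac{B^{t'}}{t'}$ with $A=|f_0(x)|^{r}$ and $B=|g_0(x)|^{r}$ yields
\[
|f_0(x)g_0(x)|^{p_1(x)} \le \frac{p_1(x)}{p_2(x)}\,|f_0(x)|^{p_2(x)} + \frac{p_1(x)}{p_3(x)}\,|g_0(x)|^{p_3(x)}.
\]
Since $\frac1{p_1}=\frac1{p_2}+\frac1{p_3}$ forces $p_1\le p_2$ and $p_1\le p_3$, the two coefficients lie in $[0,1]$. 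Integrating over $\mathbb{R}^n$ and bounding these coefficients by $1$ then gives $m_{p_1(\cdot)}(f_0 g_0)\le m_{p_2(\cdot)}(f_0)+m_{p_3(\cdot)}(g_0)\le 2$.

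Finally I would convert this modular estimate back into a norm bound. Using the convexity of the modular in its argument (valid because $p^-\ge 1$, since $\lambda^{p_1(x)}\le\lambda$ for $0\le\lambda\le1$), one has $m_{p_1(\cdot)}\big(\tfrac12 f_0 g_0\big)\le \tfrac12\, m_{p_1(\cdot)}(f_0 g_0)\le 1$, so by the unit ball property $\|\tfrac12 f_0 g_0\|_{L^{p_1(\cdot)}}\le 1$, that is $\|f_0 g_0\|_{L^{p_1(\cdot)}}\le 2$. Undoing the normalization yields \eqref{Holder_LebesgueVar} with the explicit constant $C=2$. The only genuinely delicate points, and where I would take the most care, are precisely the two places where the modular and the Luxemburg norm are exchanged: the unit ball property and the scaling inequality $m_{p_1(\cdot)}(\lambda h)\le \lambda\, m_{p_1(\cdot)}(h)$ for $0\le\lambda\le1$. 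These replace the exact homogeneity available in the constant-exponent setting and constitute the real content of the statement; the pointwise Young inequality itself is entirely routine.
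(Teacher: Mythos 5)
Your proof is correct: the normalization, the pointwise Young inequality $|f_0g_0|^{p_1(x)}\le \tfrac{p_1(x)}{p_2(x)}|f_0(x)|^{p_2(x)}+\tfrac{p_1(x)}{p_3(x)}|g_0(x)|^{p_3(x)}$, and the two passages between the modular and the Luxemburg norm via the unit ball property are all sound and yield the admissible constant $C=2$. The paper itself gives no proof of this lemma and simply cites \cite[Section 2.4]{Cruz_Libro} and \cite[Section 3.2]{Diening_Libro}; your argument is exactly the standard one found in those references, so there is nothing to compare beyond noting that you have correctly identified the only delicate points (the modular/norm exchanges, where exact homogeneity is lost).
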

Note that, given two vector fields $\vec{f}, \vg :\mathbb{R}^n\longrightarrow \mathbb{R}^n$, the estimate \eqref{Holder_LebesgueVar} can be  generalized to the product $\vec{f}\cdot \vg$. 
The proof of this result can be consulted in  \cite[Section 2.4]{Cruz_Libro} or \cite[Section 3.2]{Diening_Libro}.\\

To continue we remark that
the Luxemburg norm $\|\cdot\|_{L^{p(\cdot)}}$ defined in \eqref{Def_LuxNormLebesgue} satisfies the \emph{norm conjugate formula} \cite[Corollary 3.2.14]{Diening_Libro}. This result reads as follow.

\begin{Proposition}
    Let consider two variable exponents $p_1(\cdot),p_2(\cdot)\in \mathcal{P}(\mathbb{R}^n)$ such that 
$1=\frac{1}{p_1(x)}+\frac{1}{p_2(x)}$, for all $x\in \mathbb{R}^n$. 
Then, given   $f\in L^{p_1(\cdot)}$, there exists a numerical constant $C>0$ such that
\begin{equation}\label{Norm_conjugate_formula}
\|f\|_{L^{p_1(\cdot)}}
\leq
2 \underset{\|g\|_{L^{p_2(\cdot)}}\leq 1}{\sup}\int_{\mathbb{R}^n}|f(x)||g(x)|dx.
\end{equation}
\end{Proposition}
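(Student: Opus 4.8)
The plan is to prove this as the standard \emph{norm conjugate formula}, i.e. to show that the associate norm of $L^{p_1(\cdot)}$ is realized, up to the stated constant, by pairing $f$ against the unit ball of $L^{p_2(\cdot)}$. Since both sides are positively $1$-homogeneous in $f$, I would first reduce to the normalized case $\|f\|_{L^{p_1(\cdot)}}=1$ (the case $f=0$ a.e. being trivial). The reverse inequality $\sup_{\|g\|_{L^{p_2(\cdot)}}\le 1}\int|fg|\,dx\le C\|f\|_{L^{p_1(\cdot)}}$ is immediate from the generalized H\"older inequality \eqref{Holder_LebesgueVar}; the actual content is the stated direction, which I would establish by exhibiting an almost-extremal test function.

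Concretely, I would set $g(x)=\operatorname{sgn}(f(x))\,|f(x)|^{p_1(x)-1}$ and verify two modular computations. Using the pointwise conjugacy relation $\tfrac{1}{p_1(x)}+\tfrac{1}{p_2(x)}=1$, which yields $(p_1(x)-1)\,p_2(x)=p_1(x)$, one obtains
\[ m_{p_2(\cdot)}(g)=\int_{\mathbb{R}^n}|f(x)|^{(p_1(x)-1)p_2(x)}\,dx=\int_{\mathbb{R}^n}|f(x)|^{p_1(x)}\,dx=m_{p_1(\cdot)}(f), \]
so that in particular $g\in L^{p_2(\cdot)}$, while the pairing gives
\[ \int_{\mathbb{R}^n}|f(x)|\,|g(x)|\,dx=\int_{\mathbb{R}^n}|f(x)|^{p_1(x)}\,dx=m_{p_1(\cdot)}(f). \]

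The final step is to convert modulars back into norms. Since the paper works under $1<p^-\le p^+<+\infty$, the unit-ball property together with the continuity of $\lambda\mapsto m_{p_1(\cdot)}(f/\lambda)$ forces $m_{p_1(\cdot)}(f)=1$ whenever $\|f\|_{L^{p_1(\cdot)}}=1$. Hence $m_{p_2(\cdot)}(g)=1$, which gives $\|g\|_{L^{p_2(\cdot)}}\le 1$, and the pairing above equals $1=\|f\|_{L^{p_1(\cdot)}}$; taking the supremum over admissible $g$ delivers the bound, in fact with constant $1$, and a fortiori with the constant $2$ stated. I expect the main obstacle to be exactly this modular-to-norm passage in full generality: when $p^+=+\infty$ the identity $m_{p_1(\cdot)}(f)=1$ may fail and the candidate $g$ need not be admissible, so one must instead truncate $f$ to $f\,\mathbf{1}_{\{|x|\le R,\,|f|\le N\}}$, run the argument on the truncation, and pass to the limit by monotone convergence. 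It is precisely in this limiting and normalization step that the factor $2$ genuinely appears, exactly as in \cite[Corollary 3.2.14]{Diening_Libro}.
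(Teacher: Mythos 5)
Your proof is correct. The paper does not actually prove this proposition --- it only cites \cite[Corollary 3.2.14]{Diening_Libro} --- so there is no in-text argument to compare against; your reconstruction is the standard one for the nontrivial direction of the norm conjugate formula. Under the paper's standing assumption $1<p^-\leq p^+<+\infty$ every step checks out: the identity $(p_1(x)-1)p_2(x)=p_1(x)$, the finiteness and continuity of $\lambda\mapsto m_{p_1(\cdot)}(f/\lambda)$ (hence $m_{p_1(\cdot)}(f)=1$ after normalization), the unit-ball property giving $\|g\|_{L^{p_2(\cdot)}}\leq 1$, and the exact evaluation of the pairing; as you observe, this yields the inequality with constant $1$, the factor $2$ being needed only in the unrestricted setting of the cited corollary, where your truncation-and-limit remark is exactly the right fix.
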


\begin{Remarque}
As is natural the notions and results recently presented in the setting of 
the whole space $\mathbb{R}^n$ can be adapted to any subset $\Omega \subset\mathbb{R}^n$. 
\end{Remarque}
In the next we present some embedding results in the setting of variable Lebesgue spaces. 
\begin{Lemme}\label{lemme_embeding}
Let  $n\geq 1$, a bounded domain $\Omega\subset \mathbb{R}^n$ and   $p_1(\cdot),  p_2(\cdot)\in \mathcal{P}(\Omega)$ such that $1< p_1^+, \ p_2^+ <+\infty$.  Then, $L^{p_2(\cdot)}(\Omega)\subset L^{p_1(\cdot)}(\Omega)$ if and only if $p_1(x)\leq p_2(x)$ almost everywhere. Morevover, in the case we have the estimate
$$\|f\|_{L^{p_1(\cdot)} (\Omega) } \leq\left(1+\left|  \Omega\right|\right)\|f\|_{L^{p_2(\cdot)} (\Omega)  }.$$
\end{Lemme}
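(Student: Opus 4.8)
The plan is to prove the two implications separately, obtaining the quantitative estimate together with the sufficiency direction, and then establishing necessity by contraposition through a reduction to constant exponents. Throughout I use that $\Omega$ is bounded, so $|\Omega|<+\infty$, and that $p_i(x)\geq 1$ everywhere.

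First I would treat sufficiency. Assuming $p_1(x)\leq p_2(x)$ almost everywhere, the starting point is the elementary pointwise bound, valid for every $t\geq 0$ and a.e.\ $x$,
\[
t^{p_1(x)}\leq t^{p_2(x)}+1,
\]
which follows by distinguishing the cases $t\leq 1$ and $t>1$. Applying this to $|f(x)|$ and integrating over $\Omega$ gives the modular inequality $m_{p_1(\cdot)}(f)\leq m_{p_2(\cdot)}(f)+|\Omega|$. To pass to the Luxemburg norm I would fix any $\lambda>\|f\|_{L^{p_2(\cdot)}}$, so that monotonicity of the modular and the definition of the infimum yield $m_{p_2(\cdot)}(f/\lambda)\leq 1$. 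Setting $\mu=(1+|\Omega|)\lambda$ and using $(1+|\Omega|)^{-p_1(x)}\leq (1+|\Omega|)^{-1}$ (since $p_1(x)\geq 1$ and $1+|\Omega|\geq 1$), I obtain
\[
m_{p_1(\cdot)}\!\left(\frac{f}{\mu}\right)\leq \frac{1}{1+|\Omega|}\,m_{p_1(\cdot)}\!\left(\frac{f}{\lambda}\right)\leq \frac{1}{1+|\Omega|}\bigl(m_{p_2(\cdot)}(f/\lambda)+|\Omega|\bigr)\leq 1.
\]
Hence $\mu$ belongs to the set defining the Luxemburg norm, so $\|f\|_{L^{p_1(\cdot)}}\leq \mu=(1+|\Omega|)\lambda$; letting $\lambda\downarrow\|f\|_{L^{p_2(\cdot)}}$ delivers the stated estimate, and in particular the inclusion $L^{p_2(\cdot)}(\Omega)\subset L^{p_1(\cdot)}(\Omega)$. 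This part is self-contained from the definition \eqref{Def_LuxNormLebesgue} and entirely routine.

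For the converse I would argue by contraposition: assuming $E_0:=\{x\in\Omega:p_1(x)>p_2(x)\}$ has positive measure, I construct a function lying in $L^{p_2(\cdot)}$ but not in $L^{p_1(\cdot)}$, contradicting the inclusion. Writing $E_0=\bigcup_k\{p_1-p_2>1/k\}$, some set $E$ on which $p_1\geq p_2+\delta$ (with $\delta=1/k$ fixed) has positive measure, and there $p_2\leq p_2^+<+\infty$. Partitioning $E$ according to the value of $p_2$ into finitely many slabs of width $\eta\in(0,\delta)$, at least one slab $W$ has positive (finite) measure and satisfies $p_2<a\leq b\leq p_1$ on $W$ for two constants $a<b$, the strict gap $b-a$ coming from $\delta-\eta>0$. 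On the finite-measure set $W$ one has the classical strict non-inclusion $L^{a}(W)\not\subset L^{b}(W)$ because $a<b$; I would therefore pick $f\in L^{a}(W)\setminus L^{b}(W)$ by the standard lacunary construction concentrating growing values on a sequence of shrinking subsets, and extend it by zero to $\Omega$. Invoking the already-proven sufficiency with one constant exponent gives $L^{a}(W)\subset L^{p_2(\cdot)}(W)$ (as $p_2\leq a$) and $L^{p_1(\cdot)}(W)\subset L^{b}(W)$ (as $b\leq p_1$), whence $f\in L^{p_2(\cdot)}(\Omega)$ while $f\notin L^{p_1(\cdot)}(\Omega)$ --- the desired contradiction. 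Thus $p_1\leq p_2$ almost everywhere.

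The sufficiency direction and the norm bound are mechanical; the genuine obstacle is the necessity, and specifically the passage from the purely measure-theoretic hypothesis ``$p_1>p_2$ on a set of positive measure'' to a single slab carrying a true constant gap $a<b$ on a set of finite positive measure. The finiteness of $p_2^+$ is what makes the slab decomposition finite and guarantees such a slab exists, and keeping every set of finite positive measure is exactly what allows the classical constant-exponent embedding $L^{b}\subset L^{a}$ (and its failure in the reverse direction) to be used without any integrability subtleties at infinity.
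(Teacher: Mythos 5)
Your proof is correct. Note that the paper does not actually prove this lemma: it simply cites \cite[Corollary 2.48]{Cruz_Libro}, whose proof in that reference runs through the generalized H\"older inequality --- one writes $\frac{1}{p_1(\cdot)}=\frac{1}{r(\cdot)}+\frac{1}{p_2(\cdot)}$, applies $\|f\|_{L^{p_1(\cdot)}}\leq C\|1\|_{L^{r(\cdot)}(\Omega)}\|f\|_{L^{p_2(\cdot)}}$, and bounds $\|1\|_{L^{r(\cdot)}(\Omega)}$ by $1+|\Omega|$ using the boundedness of $\Omega$. Your sufficiency argument is more elementary and entirely self-contained: the pointwise inequality $t^{p_1(x)}\leq t^{p_2(x)}+1$ gives the modular bound $m_{p_1(\cdot)}(f)\leq m_{p_2(\cdot)}(f)+|\Omega|$, and the rescaling by $\mu=(1+|\Omega|)\lambda$ together with $(1+|\Omega|)^{-p_1(x)}\leq(1+|\Omega|)^{-1}$ recovers exactly the stated constant $1+|\Omega|$, so nothing is lost relative to the H\"older route. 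Your necessity argument (slicing $\{p_1>p_2\}$ first by the size of the gap and then into finitely many slabs of the values of $p_2$, so as to trap constants $a<b$ with $p_2<a<b\leq p_1$ on a set $W$ of positive finite measure, and then importing the classical failure of $L^{a}(W)\subset L^{b}(W)$) is also sound; the non-atomicity of Lebesgue measure guarantees the lacunary counterexample on $W$, and the finiteness of $p_2^+$ is correctly identified as what makes the slab decomposition finite. The one hypothesis you do not use is $p_1^+<+\infty$, which is indeed not needed for either direction; the lemma as stated is simply not sharp in that respect.
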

The proof of this lemma can be consulted in \cite[Corollary 2.48]{Cruz_Libro}. \\

An interesting fact in the setting of variable Lebesgue spaces is given by the extension of the result presented in Lemma \ref{lemme_embeding} to unbounded domains. 
In particular, we will be interested in the case when $p_1(x)$ is a constant function. In order to characterize it in a concise manner  we introduce the following class of exponents. 
\begin{Definition}\label{class of embedding}
Given a constant exponent 
$q\in (1,\infty)$, we define the class of variable exponents $\mP^{\text{emb}}_q(\Rt)$, as the set
    \begin{equation}
        \mP^{\text{emb}}_q(\Rt) =
        \left\{
\overline{q}(\cdot)\in \mPl(\Rt) : 
q\leq(\overline{q})^- \leq (\overline{q})^+ <+\infty
 \quad \text{ and } \quad 
\frac{ q \overline{q}(x)}{ \overline{q}(x) - q } \to +\infty 
\text{ as } |x|\to +\infty
        \right\}.
    \end{equation}
\end{Definition}
A nice and natural consequence of considering a variable exponent in the class $\mP^{\text{emb}}_q(\Rt)$
is given in the 
following result, 
which follows
by considering Theorem 2.45 and Remark 2.46 in \cite{Cruz_Libro}. 
\begin{Lemme}\label{lema embeddd set}
    Let $q\in (1,\infty)$ and  $\overline{q}(\cdot)\in\mP^{\text{emb}}_q(\Rt)$. Then,  
    $L^{\overline{q} (\cdot)}(\Rt)\subset L^{q}(\Rt)$, and there exists a numerical constant $C>0$ such that 
    $$\|f\|_{L^{q} (\Rt)} \leq C\|f\|_{L^{\overline{q}(\cdot)} (\Rt)  }.$$
\end{Lemme}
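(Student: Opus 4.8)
The plan is to realize the embedding through the generalized Hölder inequality, thereby reducing everything to the single statement that the constant function $1$ belongs to a suitable variable Lebesgue space. First I would introduce the \emph{defect exponent} $r(\cdot)$ determined pointwise by
\[
\frac{1}{q}=\frac{1}{\overline{q}(x)}+\frac{1}{r(x)},
\qquad\text{so that}\qquad
r(x)=\frac{q\,\overline{q}(x)}{\overline{q}(x)-q},
\]
which is exactly the quantity appearing in Definition \ref{class of embedding}. Since $q\leq(\overline{q})^-$, the difference $\overline{q}(x)-q$ is nonnegative, so $r(\cdot)$ is a well-defined exponent, with the convention $r(x)=+\infty$ on the set $\{\overline{q}=q\}$, which I treat separately below.

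Next, for $f\in L^{\overline{q}(\cdot)}(\Rt)$ I would write $f=f\cdot 1$ and apply the generalized Hölder inequality \eqref{Holder_LebesgueVar} with the triple $p_1\equiv q$, $p_2(\cdot)=\overline{q}(\cdot)$ and $p_3(\cdot)=r(\cdot)$, which satisfy the required pointwise relation by construction. This yields a numerical constant $C_0>0$ with
\[
\|f\|_{L^{q}(\Rt)}=\|f\cdot 1\|_{L^{q}(\Rt)}
\leq C_0\,\|f\|_{L^{\overline{q}(\cdot)}(\Rt)}\,\|1\|_{L^{r(\cdot)}(\Rt)}.
\]
The whole embedding, together with its constant $C=C_0\,\|1\|_{L^{r(\cdot)}}$, therefore reduces to proving $\|1\|_{L^{r(\cdot)}}<+\infty$. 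On the set $\{\overline{q}=q\}$ where $r=+\infty$ the contribution is harmless: there $\overline{q}(x)=q$ and $|1|\leq\lambda$ for $\lambda\geq1$, so this part never obstructs the modular estimate, and one may equivalently split $\Rt$ into $\{r<+\infty\}$ and $\{r=+\infty\}$ and argue trivially on the latter.

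The crux, and the step I expect to be the main obstacle, is establishing $\|1\|_{L^{r(\cdot)}}<+\infty$. Here the condition $r(x)\to+\infty$ alone is \emph{not} sufficient, since if $r$ grew only like $\log\log|x|$ the associated modular would diverge for every $\lambda$; one must exploit that $\overline{q}(\cdot)\in\mPl(\Rt)$. Log-Hölder continuity at infinity forces $\overline{q}$ to have a finite limit $\overline{q}_\infty$ at the rate $|\overline{q}(x)-\overline{q}_\infty|\leq C_\infty/\log(e+|x|)$; combined with $r(x)\to+\infty$ this pins the limit down to $\overline{q}_\infty=q$, whence
\[
0\leq \overline{q}(x)-q\leq \frac{C_\infty}{\log(e+|x|)}
\qquad\text{and thus}\qquad
r(x)\geq \frac{q^2}{\overline{q}(x)-q}\geq c\,\log(e+|x|)
\]
for $|x|$ large, with $c=q^2/C_\infty$. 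I would then estimate the modular of $1/\lambda$ directly: for $\lambda>1$ one has $\lambda^{-r(x)}\leq(e+|x|)^{-c\log\lambda}$ on the region where the logarithmic lower bound holds, and this is integrable over $\Rt$ as soon as $c\log\lambda>n$, while the remaining bounded region contributes a finite amount since $\lambda^{-r(x)}\leq1$ there. Hence $m_{r(\cdot)}(1/\lambda)<+\infty$ for $\lambda$ large enough, giving $\|1\|_{L^{r(\cdot)}}<+\infty$ and closing the argument. This is precisely the mechanism behind Theorem 2.45 and Remark 2.46 of \cite{Cruz_Libro}, specialized to the class $\mP^{\text{emb}}_q(\Rt)$.
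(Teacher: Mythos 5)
Your proof is correct and takes essentially the same route as the paper: the paper offers no argument of its own for this lemma, deferring entirely to Theorem 2.45 and Remark 2.46 of \cite{Cruz_Libro}, and your reduction via the generalized H\"older inequality to the finiteness of $\|1\|_{L^{r(\cdot)}}$, settled through the log-H\"older decay bound $r(x)\geq c\log(e+|x|)$, is precisely the mechanism of that cited result. Nothing further is needed.
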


In this point we must emphasize that
the variable Lebesgue spaces  are not translation invariant, and thus, the Young's inequality for convolution cannot be generalized to  $L^{p(\cdot)}$ spaces for non-constant exponents (see \cite[Section 3.6]{Diening_Libro}). 
In consequence, new ideas are needed to deal with 
many of the classical operators and techniques that appear in the analysis of PDEs. \\

One of the classical techniques to study the boundedness  of such operators on $L^{{p}(\cdot)}$ spaces, 
is  to  consider some constrains on the variable exponents. The most important condition on these exponents is given by the {\it log-H\"older continuity condition}, which we present in the following.

\begin{Definition}
Let consider a variable exponent $p(\cdot)\in \mathcal{P}(\mathbb{R}^n)$ such that there exists  the limit value 
$
\frac{1}{p_\infty}=\underset{|x|\to +\infty}{\lim}\frac{1}{p(x)}.$ 
\begin{enumerate}
    \item We say that $p(\cdot)$ is locally log-H\"older continuous if  for each $x,y\in \mathbb{R}^n,$ there exists a constant $C>0$ such that 
    $$\left|\frac{1}{p(x)}-\frac{1}{p(y)}\right|\leq \frac{C}{\log(e+1/|x-y|)}.$$ 
    \item We say that $p(\cdot)$ satisfies the log-H\"older decay condition,  if  for each $x\in \mathbb{R}^n,$ there exists a constant $C>0$ such that 
    $$\left|\frac{1}{p(x)}-\frac{1}{p_\infty}\right| \leq \frac{C}{\log(e+|x|)}.$$
    \item We say that $p(\cdot)$ is globally log-H\"older continuous in $\mathbb{R}^n$ if it is locally  log-H\"older continuous and satisfies the log-H\"older decay condition.
    \item We define the class as the following class of variable exponents
    \begin{equation}
        \mathcal{P}^{log}(\mathbb{R}^n) =
        \left\{
p(\cdot)\in \mP(\Rt) : 
p(\cdot) \text{ is  globally log-H\"older continuous in }\mathbb{R}^n
        \right\}.
    \end{equation}
\end{enumerate}
\end{Definition}
To continue,  we recall the definition of the Hardy-Littlewood maximal function.

\begin{Definition}
Let $f:\mathbb{R}^n\longrightarrow \mathbb{R}$ a locally integrable function. The Hardy-Littlewood maximal function $\mathcal{M}$ is defined by 
$$\displaystyle{\mathcal{M}(f)(x)=\underset{B \ni x}{\sup } \;\frac{1}{|B|}\int_{B }|f(y)|dy}$$ where $B$ is an open ball of $\mathbb{R}^n$.
\end{Definition}
With the notions of globally log-H\"older continuous exponents  we can present the following key result 
(see \cite[Section 4.3]{Diening_Libro}). 
\begin{Theoreme}\label{MaximalFunc_LebesgueVar}
    Let  consider a variable exponent $p(\cdot)\in \mathcal{P}^{log}(\mathbb{R}^n)$ satisfying  $p^->1$. Then, given $f\in L^{p(\cdot)}$, there exist a constant $C>0$ such that
\begin{equation}
\|\mathcal{M}(f)\|_{L^{p(\cdot)}}\leq C \|f\|_{L^{p(\cdot)}}.
\end{equation}
\end{Theoreme}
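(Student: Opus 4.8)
The statement is the fundamental theorem of Diening and of Cruz-Uribe--Fiorenza--Neugebauer on the boundedness of the Hardy--Littlewood maximal operator in the variable setting, so the plan is to reproduce the structure of that argument. Since $1<p^-\le p^+<+\infty$, the Luxemburg norm and the modular $m_{p(\cdot)}$ are comparable and the estimate is homogeneous, so it suffices to prove the \emph{modular inequality}: there exists $C>0$ such that
\begin{equation*}
m_{p(\cdot)}(f)=\int_{\mathbb{R}^n}|f(x)|^{p(x)}\,dx\le 1\quad\Longrightarrow\quad\int_{\mathbb{R}^n}\big(\mathcal{M}f(x)\big)^{p(x)}\,dx\le C.
\end{equation*}
From now on I fix $f\ge 0$ with $m_{p(\cdot)}(f)\le 1$; the bound for general $f$ then follows by replacing $f$ with $|f|$ and by homogeneity.

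The core of the proof is a pointwise \emph{key estimate} that converts the variable exponent into the constant exponent $p^-$: using the hypothesis $p(\cdot)\in\mathcal{P}^{\log}(\mathbb{R}^n)$, one shows that there exist a constant $C>0$ and a fixed function $G\in L^1(\mathbb{R}^n)\cap L^{\infty}(\mathbb{R}^n)$ (for instance $G(x)=(e+|x|)^{-N}$ with $N>n$) such that, for every $f$ normalized as above,
\begin{equation*}
\big(\mathcal{M}f(x)\big)^{p(x)}\le C\,\Big(\mathcal{M}\big(f^{p(\cdot)/p^-}\big)(x)\Big)^{p^-}+C\,G(x),\qquad x\in\mathbb{R}^n.
\end{equation*}
Proving this inequality is the main obstacle, and it is exactly where both log-Hölder conditions are used: for a ball $B\ni x$ one estimates the average $\tfrac{1}{|B|}\int_B f$ raised to the power $p(x)$, and the local log-Hölder continuity intervenes at small scales to replace $p(x)$ by the exponents $p(y)$ appearing inside the average (this is harmless precisely because the local condition keeps $|B|^{-|p(x)-p(y)|}$ bounded), while the log-Hölder decay condition controls the far field by comparing $p(x)$ with the limit value $p_\infty$ and absorbing the discrepancy into the integrable weight $G$. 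The elementary splitting $f=f\chi_{\{f\ge 1\}}+f\chi_{\{f<1\}}$ is convenient here, since on $\{f\ge 1\}$ one has $f\le f^{p(\cdot)/p^-}$ whereas on $\{f<1\}$ the reverse inequality holds.

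Granting the key estimate, the conclusion is immediate, and this is where the hypothesis $p^->1$ becomes essential. Writing $F:=f^{p(\cdot)/p^-}$, the normalization gives $\|F\|_{L^{p^-}(\mathbb{R}^n)}^{p^-}=\int_{\mathbb{R}^n} f^{p(x)}\,dx\le 1$; since $p^->1$, the classical Hardy--Littlewood maximal theorem applies in the \emph{constant}-exponent space $L^{p^-}(\mathbb{R}^n)$ and yields $\int_{\mathbb{R}^n}\big(\mathcal{M}F\big)^{p^-}\le C\|F\|_{L^{p^-}}^{p^-}\le C$. Integrating the key estimate over $\mathbb{R}^n$ and using $\|G\|_{L^1}<+\infty$ then gives $\int_{\mathbb{R}^n}\big(\mathcal{M}f\big)^{p(x)}\le C$, which is the desired modular inequality; undoing the reduction of the first paragraph produces the norm bound $\|\mathcal{M}f\|_{L^{p(\cdot)}}\le C\|f\|_{L^{p(\cdot)}}$.
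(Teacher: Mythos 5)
The paper does not actually prove this theorem: it is quoted as a known result with a pointer to \cite[Section 4.3]{Diening_Libro}, so there is no in-paper argument to compare yours against. Your outline is the standard Diening / Cruz-Uribe--Fiorenza--Neugebauer strategy from precisely those references: reduce by homogeneity and the unit-ball property of the Luxemburg norm (here you should invoke $p^+<+\infty$ rather than a vague ``comparability'' of norm and modular) to a modular inequality, establish a pointwise estimate of the form $(\mathcal{M}f(x))^{p(x)}\leq C(\mathcal{M}(f^{p(\cdot)/p^-})(x))^{p^-}+C\,G(x)$ with $G$ integrable, and then conclude from the classical maximal theorem in $L^{p^-}$, which is where $p^->1$ enters. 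That architecture is correct and is the right way to prove the statement.

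The genuine gap is that the key pointwise estimate --- which you yourself identify as ``the main obstacle'' --- is asserted and motivated but never derived. For this theorem, that estimate \emph{is} the proof: everything else (the modular reduction, the splitting of $f$ at height $1$, the final integration) is routine. To close the gap you would need to carry out the ball-by-ball argument in detail: for $B\ni x$ with $|B|\leq 1$, use local log-H\"older continuity to show $|B|^{-(p(x)-p_B^-)/p^-}\leq C$ where $p_B^-=\inf_B p$, combine this with Jensen/H\"older on the average $\frac{1}{|B|}\int_B f$ and the normalization $\int f^{p(y)}dy\leq 1$ to bound $\left(\frac{1}{|B|}\int_B f\right)^{p(x)}$ by $C\left(\frac{1}{|B|}\int_B f^{p(y)/p^-}dy\right)^{p^-}+C(e+|x|)^{-N}$; and for $|B|\geq 1$ use the decay condition to compare $p(x)$ and $p(y)$ with $p_\infty$ and absorb the error into $G$. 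Without these computations the proposal is a correct roadmap of the literature proof rather than a proof.
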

In the next  we recall a classical Lemma about the Hardy-Littlewood maximal function (see \cite[Section 2.1]{grafakos2008classical}):
\begin{Lemme}\label{lemme_conv_maximal}
If $\varphi$ is a radially decreasing function on $\mathbb{R}^n$ and $\vf$ is a locally integrable function, then 
\begin{equation*}
|(\varphi\ast\vf)(x)| \leq \|\varphi\|_{L^1}  \mathcal{M} (\vf)(x),
\end{equation*}
where $\mathcal{M}$ is the Hardy-Littlewood maximal function.
\end{Lemme}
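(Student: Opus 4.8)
The plan is to prove the pointwise bound by representing the radially decreasing weight $\varphi$ through its super-level sets, which are balls centered at the origin, and then recognizing each such ``layer'' as producing an average of $\vf$ over a ball centered at the point $x$. Since $\varphi$ is radially decreasing we may write $\varphi(z)=g(|z|)$ for a nonincreasing $g:[0,+\infty[\,\to[0,+\infty[$, so that for every $t>0$ the super-level set $\{z:\varphi(z)>t\}$ coincides with an open ball $B(0,r_t)$ centered at the origin, where $r_t=\sup\{r\geq 0:g(r)>t\}$ and $r_t=0$ when the set is empty. The layer-cake formula then furnishes the representation $\varphi(z)=\int_0^{+\infty}\mathbf{1}_{B(0,r_t)}(z)\,dt$ for almost every $z$, and this is the identity I would exploit.

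First I would bound the modulus of the convolution by the convolution of the moduli, writing $|(\varphi\ast \vf)(x)|\leq \int_{\mathbb{R}^n}\varphi(x-y)|\vf(y)|\,dy$, which reduces the problem to a nonnegative integrand. Substituting the layer-cake representation of $\varphi(x-y)$ and applying Tonelli's theorem, which is legitimate because all quantities are nonnegative and measurable, I would interchange the $t$- and $y$-integrations to obtain $\int_0^{+\infty}\big(\int_{B(x,r_t)}|\vf(y)|\,dy\big)\,dt$, using that $\mathbf{1}_{B(0,r_t)}(x-y)=1$ precisely when $y\in B(x,r_t)$.

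The heart of the argument is the next step. For each fixed $t$ the ball $B(x,r_t)$ contains $x$, so by the very definition of the Hardy--Littlewood maximal function one has $\int_{B(x,r_t)}|\vf(y)|\,dy\leq |B(x,r_t)|\,\mathcal{M}(\vf)(x)=|B(0,r_t)|\,\mathcal{M}(\vf)(x)$, the last equality coming from translation invariance of Lebesgue measure. Pulling the factor $\mathcal{M}(\vf)(x)$ out of the $t$-integral leaves $\mathcal{M}(\vf)(x)\int_0^{+\infty}|B(0,r_t)|\,dt$, and a final application of Tonelli identifies $\int_0^{+\infty}|B(0,r_t)|\,dt=\int_{\mathbb{R}^n}\int_0^{+\infty}\mathbf{1}_{B(0,r_t)}(z)\,dt\,dz=\int_{\mathbb{R}^n}\varphi(z)\,dz=\|\varphi\|_{L^1}$, which yields exactly the claimed inequality.

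I expect the only genuine subtlety to lie in the structural step asserting that the super-level sets of $\varphi$ are honestly balls centered at the origin and that the layer-cake representation holds almost everywhere; this rests solely on the monotonicity of $g$ and requires one merely to disregard the measure-zero spheres $\{|z|=r_t\}$, which affects no integral. Everything else is bookkeeping justified by Tonelli's theorem, and the stated inequality is trivially true when $\varphi\notin L^1$, so that no integrability hypothesis beyond the one encoded in the finiteness of $\|\varphi\|_{L^1}$ is required.
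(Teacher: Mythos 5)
Your proof is correct, and it is worth noting that the paper itself offers no proof of this lemma at all: it simply points to \cite[Section 2.1]{grafakos2008classical}, so the natural comparison is with that standard textbook argument. Grafakos proves the inequality first for simple radially decreasing functions $\varphi=\sum_j a_j \mathbf{1}_{B(0,r_j)}$, where the key step is exactly the one you isolate, namely $\int_{B(x,r)}|f(y)|\,dy\leq |B(0,r)|\,\mathcal{M}(f)(x)$, and then extends to general $\varphi$ by monotone approximation from below. Your layer-cake representation $\varphi(z)=\int_0^{+\infty}\mathbf{1}_{\{\varphi>t\}}(z)\,dt$ is the continuous form of that same decomposition into balls centered at the origin, and it compresses the approximation-plus-limit step into a single application of Tonelli; what it buys is a shorter, fully self-contained argument with no monotone convergence passage, at the modest price of the measurability bookkeeping you correctly flag (the super-level set may be an open or closed ball, but the discrepancy is a sphere, which is null for each fixed $t$, so the identity holds almost everywhere on the product space and all three Tonelli interchanges are legitimate since every integrand is nonnegative). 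Two small points deserve a remark. First, the layer-cake formula requires $\varphi\geq 0$; this is implicit in the intended meaning of ``radially decreasing'' here, and in any case it is forced by integrability, since a nonincreasing radial profile taking a negative value would stay below that value on a set of infinite measure, contradicting $\varphi\in L^1$. Second, your closing observation that the inequality is vacuous when $\|\varphi\|_{L^1}=+\infty$ is right up to the degenerate case $\mathcal{M}(f)(x)=0$, which occurs only when $f=0$ almost everywhere, where both sides vanish. Neither point affects the validity of the argument.
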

Let consider the Riesz transforms $(\mathcal{R}_j)_{1\leq j\leq n}$, which are defined  at the Fourier level by the expression $$\widehat{\mathcal{R}_j(f)}(\xi)=-\frac{i\xi_j}{|\xi|}\widehat{f}(\xi).$$ 
The next result state that the Riesz transform of a measurable function 
are also bounded in Lebesgue spaces of variable exponent.
A proof of this fact can be consulted in \cite[Sections 6.3 and 12.4]{Diening_Libro}.
\begin{Lemme}
Let consider a variable exponent  $p(\cdot)\in \mathcal{P}^{log}(\mathbb{R}^n)$ satisfying $1<p^-\leq p^+<+\infty$. Then, given $f\in L^{p(\cdot)}$, there exist a constant $C>0$ such that
\begin{equation}\label{Riesz_LebesgueVar}
\|\mathcal{R}_j(f)\|_{L^{p(\cdot)}}\leq C \|f\|_{L^{p(\cdot)}},
\end{equation}
\end{Lemme}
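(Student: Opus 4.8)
The plan is to obtain \eqref{Riesz_LebesgueVar} by transferring classical weighted $L^p$ bounds to the variable setting through Rubio de Francia extrapolation, rather than arguing directly. A direct argument is blocked for two reasons already emphasized above: the Riesz kernel $x_j/|x|^{n+1}$ is not integrable, so Lemma \ref{lemme_conv_maximal} cannot be applied, and $L^{p(\cdot)}(\R^n)$ is not translation invariant, so Young's convolution inequality is unavailable. The singular integral must therefore be tamed by the maximal function, whose boundedness on $L^{p(\cdot)}$ we already have at our disposal from Theorem \ref{MaximalFunc_LebesgueVar}.

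First I would recall the classical fact that each $\mathcal{R}_j$ is a Calder\'on--Zygmund operator with a standard kernel, so that by the Hunt--Muckenhoupt--Wheeden and Coifman--Fefferman theory it is bounded on the weighted space $L^{p_0}(w)$ for every fixed $p_0\in(1,\infty)$ and every Muckenhoupt weight $w\in A_{p_0}$, with operator norm depending only on $n$, $p_0$ and the $A_{p_0}$ characteristic of $w$. Second, I would invoke the extrapolation theorem for variable Lebesgue spaces: any operator enjoying such a uniform family of weighted estimates is bounded on $L^{p(\cdot)}(\R^n)$ provided $p(\cdot)\in\mathcal{P}^{\log}(\R^n)$ with $1<p^-\leq p^+<+\infty$. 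The hypotheses $p(\cdot)\in\mathcal{P}^{\log}$ and $p^->1$ are precisely what ensure, via Theorem \ref{MaximalFunc_LebesgueVar}, that $\mathcal{M}$ is bounded both on $L^{p(\cdot)}$ and, by duality, on the associate space with conjugate exponent $p'(\cdot)$; these two maximal bounds are the only inputs the extrapolation theorem needs in order to produce the estimate \eqref{Riesz_LebesgueVar}.

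I expect the main obstacle to lie exactly in this last transfer, namely in justifying the hypotheses of the variable-exponent extrapolation theorem. Because the translation invariance underpinning the classical Fourier and convolution proofs is absent, the log-H\"older conditions cannot be relaxed: they are indispensable both for the maximal estimate and, through it, for the associate-space estimate. A self-contained alternative that bypasses extrapolation would be to dominate the Fefferman--Stein sharp maximal function of $\mathcal{R}_j f$ pointwise by $\mathcal{M}f$ and then apply the sharp maximal inequality in $L^{p(\cdot)}$; this route again reduces the whole matter to Theorem \ref{MaximalFunc_LebesgueVar}, at the cost of first establishing the sharp maximal inequality in the variable-exponent framework.
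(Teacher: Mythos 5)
The paper does not actually prove this lemma: it is stated as a known result and the proof is delegated to \cite[Sections 6.3 and 12.4]{Diening_Libro}, so there is no internal argument to compare yours against. Your extrapolation route is a correct and entirely standard way to establish \eqref{Riesz_LebesgueVar}: the Riesz transforms are Calder\'on--Zygmund operators satisfying $\|\mathcal{R}_j f\|_{L^{p_0}(w)}\leq C(n,p_0,[w]_{A_{p_0}})\|f\|_{L^{p_0}(w)}$ for every $w\in A_{p_0}$, and the Rubio de Francia extrapolation theorem in the variable-exponent setting (Cruz-Uribe--Fiorenza--Martell--P\'erez; see also the extrapolation chapter of \cite{Diening_Libro}) converts this family of weighted bounds into the $L^{p(\cdot)}$ bound, the only input being the boundedness of $\mathcal{M}$ on $L^{p(\cdot)}$ and on $L^{p'(\cdot)}$. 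One point to fix: you claim the bound for $\mathcal{M}$ on $L^{p'(\cdot)}$ follows ``by duality'' from the bound on $L^{p(\cdot)}$. The maximal operator is sublinear, not linear, so there is no duality argument available; instead you should observe that $p(\cdot)\in\mathcal{P}^{\log}$ implies $p'(\cdot)\in\mathcal{P}^{\log}$ and that $(p')^-=(p^+)'>1$ precisely because $p^+<+\infty$, so Theorem \ref{MaximalFunc_LebesgueVar} applies directly to $p'(\cdot)$. This is exactly where the hypothesis $p^+<+\infty$ enters, and it is worth stating explicitly. A second, minor technicality you should acknowledge is that $\mathcal{R}_j f$ must first be defined on a dense subclass (say $f\in L^2\cap L^{p(\cdot)}$) and then extended by continuity using \eqref{Riesz_LebesgueVar}; extrapolation only yields the a priori estimate. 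With those two repairs your sketch is a complete and self-contained replacement for the citation, and arguably more informative than the paper's treatment.
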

In the following we  recall the notion of  Riesz potential operator.
\begin{Definition}\label{Definition_RieszPotential}
Let consider a parameter $\beta \in (0,n)$ and  a measurable function $f$.  We define the Riesz potential operator $\mathcal{I}_\beta (f):\R^n \to [0,+\infty]$  by the expression 
\begin{equation}
\mathcal{I}_\beta(f)(x):=\int_{\mathbb{R}^n}\frac{|f(y)|}{|x-y|^{n-\beta}}dy. 
\end{equation}
\end{Definition}
This operator is bounded in variable Lebesgue spaces if we consider appropriate globally log-H\"older continuous variable exponents. 
A precise statement of this important is presented in the next.
\begin{Theoreme}\label{theo.PotentialRieszVariable0}
Let consider a variable exponent  $p(\cdot)\in \mathcal{P}^{log}(\mathbb{R}^n)$ and a parameter $\beta\in (0,n/p^+)$. Then, there exist a numerical constant $C>0$ such that the following estimate follows
\begin{equation}\label{PotentialRieszVariable0}
\|\mathcal{I}_\beta(f)\|_{L^{q(\cdot)}}\leq C\|f\|_{L^{p(\cdot)}},\qquad \mbox{with} \quad  \frac{1}{q(\cdot)}=\frac{1}{p(\cdot)}-\frac{\beta}{n}.
\end{equation}
\end{Theoreme}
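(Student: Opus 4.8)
The plan is to establish the boundedness through a pointwise \emph{Hedberg-type inequality} comparing the Riesz potential with the Hardy--Littlewood maximal function, and then to transfer this to the norm level using the maximal function bound of Theorem \ref{MaximalFunc_LebesgueVar}. Since both sides of \eqref{PotentialRieszVariable0} are homogeneous under the scaling $f\mapsto \lambda f$, it suffices to prove the estimate under the normalization $\|f\|_{L^{p(\cdot)}}\leq 1$, and to reduce the claim to a control of the modular $m_{q(\cdot)}(\mathcal{I}_\beta f/C)$. First I would split, for an arbitrary radius $\delta>0$ to be chosen as a function of $x$,
$$\mathcal{I}_\beta f(x) = \int_{|x-y|<\delta}\frac{|f(y)|}{|x-y|^{n-\beta}}\,dy + \int_{|x-y|\geq\delta}\frac{|f(y)|}{|x-y|^{n-\beta}}\,dy =: A_\delta(x)+B_\delta(x).$$

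For the local term $A_\delta$, I observe that it is the convolution of $|f|$ with the radially decreasing kernel $\varphi_\delta(z)=\chi_{\{|z|<\delta\}}|z|^{-(n-\beta)}$, whose $L^1$ norm equals $C\delta^{\beta}$; hence Lemma \ref{lemme_conv_maximal} gives directly $A_\delta(x)\leq C\,\delta^{\beta}\,\mathcal{M}f(x)$, an estimate that does not feel the variable exponent and proceeds exactly as in the constant case. For the global term $B_\delta$, I would apply the variable-exponent H\"older inequality \eqref{Holder_LebesgueVar} with the conjugate pair $(p(\cdot),p'(\cdot))$, pairing $f$ against the truncated kernel $k_\delta(y)=\chi_{\{|x-y|\geq\delta\}}|x-y|^{-(n-\beta)}$, which yields $B_\delta(x)\leq C\,\|f\|_{L^{p(\cdot)}}\,\|k_\delta\|_{L^{p'(\cdot)}}$.

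The crucial point, and the step I expect to be the main obstacle, is to bound $\|k_\delta\|_{L^{p'(\cdot)}}$ by $C\,\delta^{\beta-n/p(x)}$ up to a harmless, globally integrable correction term. Because $L^{p(\cdot)}$ is \emph{not} translation invariant, this norm cannot be reduced by a change of variables; instead one must split the far region into the part near $x$, where the local log-H\"older continuity forces $p(y)\approx p(x)$ so that the kernel behaves like its constant-exponent counterpart, and the part where $|y|\to+\infty$, where the log-H\"older decay condition forces $p(y)\approx p_\infty$ and the rapid decay of $k_\delta$ renders the contribution summable. This is precisely the place where both hypotheses defining $\mathcal{P}^{\log}(\mathbb{R}^n)$ enter, while the condition $\beta<n/p^{+}$ guarantees the integrability $(n-\beta)p'(y)>n$ needed for convergence of the tail.

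Combining the two bounds gives, under the normalization $\|f\|_{L^{p(\cdot)}}\leq 1$ and modulo the correction term,
$$\mathcal{I}_\beta f(x)\leq C\left(\delta^{\beta}\,\mathcal{M}f(x)+\delta^{\beta-n/p(x)}\right),$$
and optimizing by choosing $\delta=(\mathcal{M}f(x))^{-p(x)/n}$ balances the two contributions and produces the Hedberg estimate $\mathcal{I}_\beta f(x)\leq C\,(\mathcal{M}f(x))^{p(x)/q(x)}$, where I have used the algebraic identity $1-\beta p(x)/n=p(x)/q(x)$ coming from the relation $\tfrac{1}{q(\cdot)}=\tfrac{1}{p(\cdot)}-\tfrac{\beta}{n}$. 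Finally I would raise this inequality to the power $q(x)$, integrate in $x$, and use that $\int_{\mathbb{R}^n}(\mathcal{M}f(x))^{p(x)}\,dx=m_{p(\cdot)}(\mathcal{M}f)$ is controlled, since $\|\mathcal{M}f\|_{L^{p(\cdot)}}\leq C\|f\|_{L^{p(\cdot)}}\leq C$ by Theorem \ref{MaximalFunc_LebesgueVar} and the modular is dominated by the norm when the latter is bounded. This shows $m_{q(\cdot)}(\mathcal{I}_\beta f/C)\leq C$, hence $\|\mathcal{I}_\beta f\|_{L^{q(\cdot)}}\leq C$, and undoing the normalization by homogeneity completes the proof.
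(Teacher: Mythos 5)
The paper does not actually prove this statement: Theorem \ref{theo.PotentialRieszVariable0} is quoted as a known result, with the proof delegated entirely to \cite[Section 6.1]{Diening_Libro}. So there is no internal argument to compare yours against; your proposal has to be judged on its own merits, and on those merits it follows the standard route (the Hedberg pointwise inequality $\mathcal{I}_\beta f(x)\leq C(\mathcal{M}f(x))^{p(x)/q(x)}$ plus the maximal-function bound of Theorem \ref{MaximalFunc_LebesgueVar}), which is essentially how this theorem is proved in the literature you would be reproducing. The local estimate $A_\delta(x)\leq C\delta^\beta\mathcal{M}f(x)$, the optimization $\delta=(\mathcal{M}f(x))^{-p(x)/n}$, the algebra $1-\beta p(x)/n=p(x)/q(x)$, and the final passage from the pointwise bound to the modular $m_{q(\cdot)}$ under the normalization $\|f\|_{L^{p(\cdot)}}\leq 1$ (using $q^+<\infty$, which follows from $\beta<n/p^+$) are all correct.

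The one place where your argument is an outline rather than a proof is exactly the step you flag yourself: the bound $\|k_\delta\|_{L^{p'(\cdot)}}\leq C\,\delta^{\beta-n/p(x)}+(\text{error})$, with the error term shown to produce a function of $x$ lying in $L^{q(\cdot)}$. You correctly identify that translation invariance fails, that local log-H\"older continuity must be used near $x$ and the decay condition at infinity must be used for the far tail, and that $\beta<n/p^+$ gives the integrability $(n-\beta)p'(y)>n$; but none of this is carried out, and it is the entire technical content of the theorem (in the constant-exponent case the same computation is a one-line change of variables). In particular the case $\delta>1$, where $\delta^{\beta-n/p(x)}$ is no longer small and the additive correction genuinely appears, needs a separate treatment that your sketch only gestures at. As a blind reconstruction of a cited result the strategy is the right one and nothing in it would fail, but the pivotal lemma is asserted, not proven.
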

A proof of this result  can be consulted in 
\cite[Section 6.1]{Diening_Libro}.
\begin{Remarque}
    Note that the estimate \eqref{PotentialRieszVariable0} introduces a very strong relationship between the variable exponents $p(\cdot)$ and $q(\cdot)$. For instance, if $p(\cdot)$ is a constant function, this forces  to the exponent $q(\cdot)$ to be constant function as well. 
\end{Remarque}
In order  to provide   more flexibility on these parameters (see Remark \ref{Rem_Riesz_MixedLebesgue} below) we will consider the following functional spaces (see  in \cite{chamorro_paper_lpvar} for more details).
\begin{Definition}
Let consider a variable exponent  $p(\cdot)\in \mathcal{P}^{\log}(\mathbb{R}^n)$  and $q\in (1,+\infty)$. Then, we define  the mixed Lebesgue space $\mathcal{L}^{p(\cdot)}_q(\mathbb{R}^n)$ as   $$\mathcal{L}^{p(\cdot)}_q(\mathbb{R}^n)=L^{p(\cdot)}(\mathbb{R}^n)\cap L^{q}(\mathbb{R}^n),$$ which can be normed 
by the expression 
\begin{equation}\label{MixedLebesgue}
\|\cdot\|_{\mathcal{L}^{p(\cdot)}_q}=\max\{\|\cdot\|_{L^{p(\cdot)}}, \|\cdot\|_{L^{q}}\}.
\end{equation}
\end{Definition}
As we mentioned above, provided with these functional spaces we have the following result (see \cite{chamorro_paper_lpvar}). 
\begin{Proposition}\label{Proposition_RieszPotential}
Let consider $q\in (1,+\infty)$, a variable exponent $p(\cdot)\in \mathcal{P}^{\log}(\mathbb{R}^n)$ and a parameter $0<\beta<\min\{n/p^+, n/q\}$. Given a function $f\in \mathcal{L}^{p(\cdot)}_q(\mathbb{R}^n)$ and a variable exponent  $r(\cdot)$ defined by the following condition
\begin{equation}\label{Inegalite2Condition}
r(\cdot)=\frac{np(\cdot)}{n-\beta q},
\end{equation} then, there exist a numerical constant $C>0$  such that the following estimate follows
\begin{equation}\label{Inegalite3}
\|\mathcal{I}_\beta(f)\|_{L^{r(\cdot)}}\leq C \|f\|_{\mathcal{L}^{p(\cdot)}_q}.
\end{equation}
\end{Proposition}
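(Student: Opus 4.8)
The plan is to deduce the estimate from a pointwise Hedberg-type inequality, and then to exploit the homogeneity of the Luxemburg norm together with the boundedness of the Hardy--Littlewood maximal operator on $L^{p(\cdot)}$. Throughout, set $\theta:=\frac{\beta q}{n}$, which satisfies $0<\theta<1$ precisely because $0<\beta<n/q$. The key algebraic observation is the exponent identity
$$r(\cdot)=\frac{np(\cdot)}{n-\beta q}=\frac{p(\cdot)}{1-\theta}, \qquad\text{that is}\qquad (1-\theta)\,r(\cdot)=p(\cdot),$$
on which the whole argument hinges.

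First I would establish the pointwise bound
$$\mathcal{I}_\beta(f)(x)\leq C\,\|f\|_{L^{q}}^{\theta}\,\big(\mathcal{M}(f)(x)\big)^{1-\theta}.$$
To this end, for a radius $R>0$ I split the integral defining $\mathcal{I}_\beta(f)(x)$ into the near region $\{|x-y|<R\}$ and the far region $\{|x-y|\geq R\}$. On the near region I view the integrand as the convolution of $|f|$ with the radially decreasing kernel $\varphi(z)=|z|^{-(n-\beta)}\mathbf{1}_{\{|z|<R\}}$; since $\|\varphi\|_{L^1}\sim R^{\beta}$ for $\beta>0$, Lemma \ref{lemme_conv_maximal} yields the bound $C\,R^{\beta}\,\mathcal{M}(f)(x)$. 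On the far region, Hölder's inequality with the constant exponent $q$ gives $\|f\|_{L^q}\big(\int_{|x-y|\geq R}|x-y|^{-(n-\beta)q'}\,dy\big)^{1/q'}$, and this integral converges exactly because $(n-\beta)q'>n\Leftrightarrow\beta<n/q$, producing $C\,\|f\|_{L^q}\,R^{\beta-n/q}$. Optimizing the resulting sum over $R$ (the balancing choice being $R\sim(\|f\|_{L^q}/\mathcal{M}(f)(x))^{q/n}$) gives the claimed inequality; the degenerate cases $\mathcal{M}(f)(x)\in\{0,+\infty\}$ are trivial.

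Next I would take the $L^{r(\cdot)}$ norm of both sides. As $\|f\|_{L^q}^{\theta}$ is a scalar it factors out of the norm, so it remains to control $\big\|(\mathcal{M}f)^{1-\theta}\big\|_{L^{r(\cdot)}}$. Here I use the power rule for Luxemburg norms: testing the modular with $\lambda=\nu^{1-\theta}$ and invoking $(1-\theta)r(x)=p(x)$ transforms the defining condition $\int_{\mathbb{R}^n}\big((\mathcal{M}f)^{1-\theta}/\lambda\big)^{r(x)}dx\leq1$ into $\int_{\mathbb{R}^n}(\mathcal{M}f/\nu)^{p(x)}dx\leq1$, whence $\big\|(\mathcal{M}f)^{1-\theta}\big\|_{L^{r(\cdot)}}=\|\mathcal{M}f\|_{L^{p(\cdot)}}^{1-\theta}$. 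Since $p(\cdot)\in\mathcal{P}^{\log}(\mathbb{R}^n)$ with $p^->1$, Theorem \ref{MaximalFunc_LebesgueVar} provides $\|\mathcal{M}f\|_{L^{p(\cdot)}}\leq C\|f\|_{L^{p(\cdot)}}$. Combining these steps yields $\|\mathcal{I}_\beta(f)\|_{L^{r(\cdot)}}\leq C\,\|f\|_{L^q}^{\theta}\,\|f\|_{L^{p(\cdot)}}^{1-\theta}$, and bounding each factor by $\|f\|_{\mathcal{L}^{p(\cdot)}_q}=\max\{\|f\|_{L^{p(\cdot)}},\|f\|_{L^q}\}$ gives $\|\mathcal{I}_\beta(f)\|_{L^{r(\cdot)}}\leq C\|f\|_{\mathcal{L}^{p(\cdot)}_q}$.

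I expect the Hedberg splitting to be the step requiring the most care: one must verify that the near-part kernel is integrable (where $\beta>0$ is used) and, crucially, that the far-part Hölder integral converges, which is exactly the role of the hypothesis $\beta<n/q$. The two spaces of the intersection $\mathcal{L}^{p(\cdot)}_q$ enter separately and naturally — the constant space $L^q$ controls the far tail through Hölder, while the variable space $L^{p(\cdot)}$ controls the local part through the maximal operator — so that the mixed norm is the correct quantity on the right-hand side. The hypothesis $p^->1$ guarantees the boundedness of the maximal operator on $L^{p(\cdot)}$, while the full constraint $\beta<\min\{n/p^+,n/q\}$ keeps all exponents within their admissible ranges. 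Once the pointwise inequality and the power rule are in hand, the passage to the mixed-norm bound is purely formal.
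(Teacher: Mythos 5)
Your proof is correct and complete. The paper itself gives no argument for this proposition --- it only cites \cite{chamorro_paper_lpvar} --- and your Hedberg-type derivation (near/far splitting controlled respectively by Lemma \ref{lemme_conv_maximal} and by H\"older in $L^q$, optimization in $R$, the power rule $\|g^{1-\theta}\|_{L^{r(\cdot)}}=\|g\|_{L^{(1-\theta)r(\cdot)}}^{1-\theta}$, and Theorem \ref{MaximalFunc_LebesgueVar}) is exactly the standard proof found in that reference, so nothing is missing. One small observation: your argument never actually uses the hypothesis $\beta<n/p^+$ (only $\beta<n/q$ and the standing assumption $p^->1$ enter), so your proof in fact establishes the estimate under slightly weaker assumptions than stated.
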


\begin{Remarque} 
Note that the index $q$ is not to related to the limit exponents $p^-$ or $p^+$ nor to $p(\cdot)$. 
\end{Remarque}

\begin{Remarque}\label{Rem_Holder_Mixed_Lebesgue_Var}
Note that the functional spaces $\mathcal{L}^{p(\cdot)}_q$ inherit the properties of the spaces $L^{p(\cdot)}$ and $L^{q}$.
Thus, given variable exponents $p_1(\cdot), p_2(\cdot), p_3(\cdot), $ and constant exponents 
$q, q_2, q_3, $ such that 
$\frac{1}{p_1(\cdot)}=\frac{1}{p_2(\cdot)}+\frac{1}{p_3(\cdot)}$ and $\frac{1}{q_1}=\frac{1}{q_2}+\frac{1}{q_3}$
the following H\"older-type inequality follows $$\| fg \|_{\mathcal{L}^{p_1(\cdot)}_{q_1}}\leq \|f\|_{\mathcal{L}^{p_2(\cdot)}_{q_2}}
\|g\|_{\mathcal{L}^{p_3(\cdot)}_{q_3}}$$ Furthermore,  the Riesz transforms are also bounded in these spaces.\\
\end{Remarque}
For more details about the theory of Variable Lebesgue spaces, we recommend to the interested reader to see the books \cite{Cruz_Libro},  \cite{Diening_Libro} and \cite{BookALEX}.\\

\subsection{Some key estimates on the fractional heat kernel}
In this subsection we state some estimates related to the fractional heat kernel $\mathfrak{g}_t^\alpha (x)$ involved in the mild formulation of the system  \eqref{NS_Intro}.  Let us begin by recalling that this kernel is defined by
\begin{equation*}
\mathfrak{g}_t^\alpha (x)
=
\mathcal{F}^{-1}(e^{-t|\xi|^{2\alpha}})(x)
=
(2\pi)^{-\frac{n}{2}}\int_{\mathbb{R}^{n}}e^{ix\cdot\xi}e^{-t|\xi|^{2\alpha}}d\xi.
\end{equation*}

\begin{Remarque}\label{remark 2.1 MIAO}
Let consider $x\in \R^n$ and the fractional heat kernel $\mathfrak{g}_t^\alpha (x)$.
Then, there exists a numerical constant $C>0$ such that the following pointwise estimates follow 
\begin{equation}\label{eq 1 1 24}
|\mathfrak{g}_t^\alpha (x)| 
\leq
C
\dfrac{t}{(t^{\frac{1}{2\alpha}}+|x|)^{n+2\alpha}}
\quad \text{ and }\quad
|\nabla  \mathfrak{g}_t^\alpha (x)| 
\leq
C
\dfrac{1}{(t^{\frac{1}{2\alpha}}+|x|)^{n+1}}
.
\end{equation}
\end{Remarque}
For a precise statement of this remark see  \cite[Lemma 2.1  and Remark 2.1]{miao2008well}.

\begin{Lemme}\label{Lemma 3.1 MIAO}
Let consider  $x\in \R^n$, the fractional heat kernel $\mathfrak{g}_t^\alpha (x)$,  and  the parameters $1\leq p \leq q \leq +\infty $. Then, given $\alpha,\nu >0$,   there exists a numerical constant $C>0$ such that following estimates follow
\begin{enumerate}
    \item  $\left\|
    \mathfrak{g}_t^\alpha *
    f(x)\right\|_{L^q}
\leq
C t^{-\frac{n}{2 \alpha}\left(\frac{1}{p}-\frac{1}{q}\right)}\|f\|_{L^p}$,
\item $\left\|(-\Delta)^{\frac{\nu}{2}} \mathfrak{g}_t^\alpha * 
f(x)\right\|_{L^q} \leq C t^{-\frac{v}{2 \alpha}-\frac{n}{2 \alpha}\left(\frac{1}{p}-\frac{1}{q}\right)}\|f\|_{L^p}$.
\end{enumerate}
\end{Lemme}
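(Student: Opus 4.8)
The plan is to combine Young's inequality for convolutions with the scaling structure of the fractional heat kernel, thereby reducing both estimates to the finiteness of certain $L^r$-norms of the kernel at the fixed time $t=1$. First I would fix exponents $1\le p\le q\le +\infty$ and introduce the auxiliary exponent $r\in[1,+\infty]$ determined by $\frac{1}{r}=1-\left(\frac{1}{p}-\frac{1}{q}\right)$; since $p\le q$ this indeed yields a valid $r\ge 1$. Young's inequality then gives
\begin{equation*}
\left\|\mathfrak{g}_t^\alpha * f\right\|_{L^q}\le \left\|\mathfrak{g}_t^\alpha\right\|_{L^r}\|f\|_{L^p},
\qquad
\left\|(-\Delta)^{\frac{\nu}{2}}\mathfrak{g}_t^\alpha * f\right\|_{L^q}\le \left\|(-\Delta)^{\frac{\nu}{2}}\mathfrak{g}_t^\alpha\right\|_{L^r}\|f\|_{L^p},
\end{equation*}
so that everything reduces to controlling the $L^r$-norm of the kernel and of its fractional derivative, together with the correct power of $t$.

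The next step is to extract the $t$-dependence by scaling. Starting from the Fourier definition of $\mathfrak{g}_t^\alpha$, the substitution $\xi\mapsto t^{-1/(2\alpha)}\eta$ produces the identity $\mathfrak{g}_t^\alpha(x)=t^{-n/(2\alpha)}\mathfrak{g}_1^\alpha\big(t^{-1/(2\alpha)}x\big)$ and, since $(-\Delta)^{\nu/2}$ acts as multiplication by $|\xi|^\nu$ on the Fourier side, the analogous identity $(-\Delta)^{\nu/2}\mathfrak{g}_t^\alpha(x)=t^{-(n+\nu)/(2\alpha)}\big[(-\Delta)^{\nu/2}\mathfrak{g}_1^\alpha\big]\big(t^{-1/(2\alpha)}x\big)$. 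Computing the $L^r$-norms through the change of variables $y=t^{-1/(2\alpha)}x$ then yields exactly the factors $t^{-\frac{n}{2\alpha}\left(\frac{1}{p}-\frac{1}{q}\right)}$ and $t^{-\frac{\nu}{2\alpha}-\frac{n}{2\alpha}\left(\frac{1}{p}-\frac{1}{q}\right)}$ respectively, upon using the relation $1-\frac{1}{r}=\frac{1}{p}-\frac{1}{q}$. This matches the claimed powers of $t$ in both parts of the statement.

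It then remains to verify that $\|\mathfrak{g}_1^\alpha\|_{L^r}$ and $\|(-\Delta)^{\nu/2}\mathfrak{g}_1^\alpha\|_{L^r}$ are finite for every $r\in[1,+\infty]$, with no residual $t$-dependence. For the first part this is immediate from the pointwise bound of Remark \ref{remark 2.1 MIAO}, which at $t=1$ reads $|\mathfrak{g}_1^\alpha(x)|\le C(1+|x|)^{-(n+2\alpha)}$; since $(n+2\alpha)r>n$ for all $r\ge 1$ whenever $\alpha>0$, this function lies in every $L^r$, and the value at the origin gives the $L^\infty$ bound.

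The hard part will be the corresponding finiteness for $(-\Delta)^{\nu/2}\mathfrak{g}_1^\alpha$, since Remark \ref{remark 2.1 MIAO} only furnishes a decay estimate for the gradient and not for a general fractional power. I would handle this by analyzing the Fourier symbol $|\xi|^\nu e^{-|\xi|^{2\alpha}}$ directly: it is integrable, which controls the $L^\infty$ size of the kernel, and it is smooth away from the origin with Schwartz-type decay, so that after treating the mild singularity of $|\xi|^\nu$ at the origin one obtains a pointwise bound of the same type $|(-\Delta)^{\nu/2}\mathfrak{g}_1^\alpha(x)|\le C(1+|x|)^{-(n+2\alpha)}$ (or faster), again placing the kernel in every $L^r$. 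Alternatively, one may simply invoke the kernel estimates established in \cite{miao2008well}. Combining the scaling identities, the Young estimate, and these finiteness bounds delivers both inequalities with a constant $C$ independent of $t$, which completes the proof.
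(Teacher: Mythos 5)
The paper itself gives no proof of this lemma; it simply cites \cite[Lemma 3.1]{miao2008well}, and your Young-inequality-plus-scaling argument is precisely the standard proof of that cited result, so the approach is correct and complete in all essentials. The only imprecision is your claimed pointwise bound $|(-\Delta)^{\nu/2}\mathfrak{g}_1^\alpha(x)|\le C(1+|x|)^{-(n+2\alpha)}$: the dominant singularity of the symbol $|\xi|^\nu e^{-|\xi|^{2\alpha}}$ at the origin is the factor $|\xi|^\nu$, so the generic decay is only $C(1+|x|)^{-(n+\nu)}$, which is slower than what you wrote when $\nu<2\alpha$. Since any decay of order $|x|^{-(n+\varepsilon)}$ with $\varepsilon>0$ already places the kernel in $L^r$ for every $r\in[1,+\infty]$, this does not affect the conclusion, and the rest of your argument (the choice $\tfrac{1}{r}=1-(\tfrac{1}{p}-\tfrac{1}{q})$ and the scaling identities producing the powers of $t$) is exactly right.
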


We refer to the interested reader to \cite[Lemma 3.1]{miao2008well} for a proof of these results. 
\section{Mild solutions in variable Lebesgue spaces}\label{Secc_Proof_Existence}

In this section we study mild solutions for the nonlinear heat equation (\ref{NS_Intro}) in the setting of variable Lebesgue spaces. These mild solutions are obtained via the classical contraction mapping principle.
Note that, by considering notations introduced in \eqref{notation1} and \eqref{notation2},  the equations stated in \eqref{NS_Intro} can be recast as
\begin{equation}\label{NS_Intro2}
\begin{cases}
\partial_t u=- (-\Delta)^{\alpha} u
+
\vn^{\gamma} (| u|^b u )
+  f, &   (t,x)\in ]0,+\infty[\times \R^n, \ 
\\[7pt]
u(0,x)=u_0(x), & x\in \mathbb{R}^n,
\end{cases}
\end{equation}
where $\gamma \in \{0,1\}$. Now, due to the Duhamel formula, we can recover  equation (\ref{NS_Intro2})  in the following equivalent form 
\begin{equation}\label{NS_Integral}
u(t,x)=
\mathfrak{g}^\alpha_t\ast u_0(x)
+
\int_{0}^t
\mathfrak{g}^\alpha_{t-s}\ast\vf(s, x)
ds
-
\int_{0}^t
\mathfrak{g}^\alpha_{t-s}\ast 
\vn^{\gamma} (|u|^b u )
(s, x)
ds,
\end{equation}
where $\mathfrak{g}^\alpha_t$ denotes the fractional heat kernel. 
Any function satisfying the integral equation \eqref{NS_Integral}  is called a \textit{mild solution} of \eqref{NS_Intro2} (or \eqref{NS_Intro}).
In the next subsections we will apply the contraction mapping principle on this integral equation. 

\subsection{Proof of  Theorem \ref{Theoreme_2}}
In this subsection we consider a variable exponent $p(\cdot)\in \mathcal{P}^{log}(\R^+)$ such that $1<p^-\leq 
p(\cdot)
\leq p^+<+\infty$, and the functional space 
$$
\mathcal{E}=
\mathcal{L}^{p(\cdot)}_{   \frac{nb}{2\alpha -
\langle 1 \rangle_\gamma
}      }
(\R^n,L^\infty([0,T[ )),$$  
which is dotted with the following norm
\begin{equation}\label{Norm_PointFixe}
\|\cdot\|_{\mathcal{E}}=\max\left\{\|\cdot\|_{L^{p(\cdot)}_x(L^\infty_t)}, \|\cdot\|_{
L^{   \frac{nb}{2\alpha -
\langle 1 \rangle_\gamma
}        }_x(L^\infty_t)}
\right\}.
\end{equation}
As mentioned above, our strategy will be construct mild solutions for the integral equation  (\ref{NS_Integral}) by considering 
mapping contraction principle on thus functional setting. More precisely, the proof of Theorem \ref{Theoreme_2} is based in the next 3 propositions.
\\

Let us begin by considering the following result regarding a control of the initial data.

\begin{Proposition}\label{prop.Control_Uo}
Let $\gamma \in \{0,1\}$ fixed. Consider $\alpha\in ]\frac 1 2, 1]$,  a variable exponent $p(\cdot)\in \mathcal{P}^{log}(\R^n)$ such that $p^->1$, and 
 a  function $u_0\in \mathcal{L}^{p(\cdot)}_{   \frac{nb}{2\alpha -\langle 1 \rangle_\gamma}      }(\R^n)$. Then, there exist a numerical constant $C>0$ such that 
 \begin{equation}\label{Control_Uo}
\|\mathfrak{g}_t^\alpha\ast u_0\|_{\mathcal{E}}\leq C\|u_0\|_{\mathcal{L}^{p(\cdot)}_{   \frac{nb}{2\alpha -
\langle 1 \rangle_\gamma
}        } }.
\end{equation}
\end{Proposition}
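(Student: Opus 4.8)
The plan is to reduce the entire estimate to the boundedness of the Hardy--Littlewood maximal operator $\mathcal{M}$, using the pointwise kernel bound of Remark \ref{remark 2.1 MIAO} to produce a radially decreasing $L^1$-majorant whose $L^1$ norm is independent of $t$. Concretely, I would set $\Phi_t(x) = C\, t\,\bigl(t^{1/2\alpha}+|x|\bigr)^{-(n+2\alpha)}$, so that $|\mathfrak{g}_t^\alpha(x)| \leq \Phi_t(x)$ and $\Phi_t$ is radially decreasing in $x$. The scaling substitution $x = t^{1/2\alpha}y$ then gives $\|\Phi_t\|_{L^1} = C\int_{\mathbb{R}^n}(1+|y|)^{-(n+2\alpha)}\,dy =: C_0 < \infty$, the integral converging because $n+2\alpha > n$; crucially, $C_0$ does not depend on $t$.

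With this majorant in hand, Lemma \ref{lemme_conv_maximal} yields, for every $t\in[0,T[$ and a.e.\ $x$,
\[
|\mathfrak{g}_t^\alpha * u_0(x)| \leq \bigl(\Phi_t * |u_0|\bigr)(x) \leq \|\Phi_t\|_{L^1}\,\mathcal{M}(u_0)(x) = C_0\,\mathcal{M}(u_0)(x),
\]
the right-hand side being independent of $t$. Taking $\sup_{t\in[0,T[}$ therefore leaves it unchanged, so that $\sup_{t}|\mathfrak{g}_t^\alpha * u_0(x)| \leq C_0\,\mathcal{M}(u_0)(x)$ pointwise. This uniform-in-time domination, which decouples the time and space variables, is the heart of the argument, and it is exactly what the $t$-independence of $C_0$ buys us.

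It then remains to take the two spatial norms appearing in the definition \eqref{Norm_PointFixe} of $\|\cdot\|_{\mathcal{E}}$. For the variable-exponent part I would apply Theorem \ref{MaximalFunc_LebesgueVar} (applicable since $p(\cdot)\in\mathcal{P}^{\log}$ and $p^->1$) to obtain $\|\mathcal{M}(u_0)\|_{L^{p(\cdot)}}\leq C\|u_0\|_{L^{p(\cdot)}}$; for the constant part, with $q = \frac{nb}{2\alpha-\langle 1\rangle_\gamma}\in(1,+\infty)$ (as required for $\mathcal{L}^{p(\cdot)}_q$ to be defined), the classical $L^q$-boundedness of $\mathcal{M}$ gives $\|\mathcal{M}(u_0)\|_{L^q}\leq C\|u_0\|_{L^q}$. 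Taking the maximum of the two estimates and recalling $\|u_0\|_{\mathcal{L}^{p(\cdot)}_q}=\max\{\|u_0\|_{L^{p(\cdot)}},\|u_0\|_{L^q}\}$ produces \eqref{Control_Uo}. I expect no serious difficulty beyond the scaling computation that secures the $t$-independence of $C_0$; the only point requiring genuine care is the local integrability of $u_0$ (guaranteed by $u_0\in L^{p(\cdot)}\cap L^q$ with both exponents above $1$), which is what makes $\mathcal{M}(u_0)$ and the pointwise domination by $\Phi_t * |u_0|$ meaningful.
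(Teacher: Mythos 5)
Your proof is correct and follows essentially the same route as the paper: dominate $\sup_t|\mathfrak{g}_t^\alpha\ast u_0|$ pointwise by $C\,\mathcal{M}(u_0)$ via Lemma \ref{lemme_conv_maximal}, then invoke Theorem \ref{MaximalFunc_LebesgueVar} and the classical $L^q$-boundedness of $\mathcal{M}$ for the two components of the $\mathcal{E}$-norm. The only (harmless) difference is that you pass through the explicit radially decreasing majorant from Remark \ref{remark 2.1 MIAO} and verify its $t$-independent $L^1$ norm by scaling, whereas the paper applies the lemma directly to $\mathfrak{g}_t^\alpha$ itself using that the kernel is radially decreasing with unit mass.
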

\begin{proof}
Note that $u_0$ is a locally integrable function, since $u_0\in L^{\frac{nb}{2\alpha -
\langle 1 \rangle_\gamma
}}\subset L^1_{\text{loc}}$. Then, as the  fractional heat kernel $\mathfrak{g}_t^\alpha$ is a  radially decreasing function, 
we can consider   Lemma \ref{lemme_conv_maximal} to obtain 
$$\|\mathfrak{g}_t^\alpha\ast u_0(x)\|_{L^\infty_t}\leq C\mathcal{M}(u_0)(x).$$
Recalling  
the norm defined in \eqref{Norm_PointFixe}, we can write the estimate
\begin{eqnarray*}
\|\mathfrak{g}_t^\alpha\ast u_0\|_{\mathcal{E}}
&\leq & C\max\left\{\|\mathcal{M}(u_0)\|_{L^{p(\cdot)}}, \|\mathcal{M}(u_0)\|_{L^{\frac{nb}{2\alpha -
\langle 1 \rangle_\gamma
}  }}
\right\}.
\end{eqnarray*}
Now, by hypothesis we know that
$p(\cdot)\in \mathcal{P}^{log}(\R^n)$ with $p^->1$, then, by Theorem \ref{MaximalFunc_LebesgueVar} we have that the maximal function $\mathcal{M}$ is bounded on the Lebesgue space $L^{p(\cdot)}(\R^n)$.
Considering this fact, and since  $\mathcal{M}$ is also bounded on the space $L^{\frac{nb}{2\alpha -
\langle 1 \rangle_\gamma
}  }$,  we conclude
$$\|\mathfrak{g}_t^\alpha\ast u_0\|_{\mathcal{E}}\leq C\max\big\{\|u_0\|_{L^{p(\cdot)}},\|u_0\|_{L^{\frac{nb}{2\alpha -
\langle 1 \rangle_\gamma
}  }}\big\}\leq  C\|u_0\|_{\mathcal{L}^{p(\cdot)}_{\frac{nb}{2\alpha -
\langle 1 \rangle_\gamma
}  }}.$$
With this we conclude the proof. 
\end{proof}

\begin{Proposition}\label{prop.Control_f_preambulo}
Let $\gamma \in \{0,1\}$ fixed.
Consider $\alpha\in ]\frac 1 2, 1]$, a variable exponent $p(\cdot)\in \mathcal{P}^{\log}(\R^n)$ with $p^->1$,  and a function $f=\vn^{\gamma}(\mathcal{F})$, where $\mathcal{F}\in \mathcal{L}^{\frac{p(\cdot)}{b+1}}_{\frac{nb}{(b+1)(2\alpha-\langle 1 \rangle_\gamma)}}(\R^n, L^\infty([0,T[ ))$.
Then, there exist a numerical constant $C>0$ such that 
\begin{equation}\label{Control_f_preambulo}
\left\|\int_{0}^t\mathfrak{g}_{t-s}^\alpha\ast 
f (\cdot, \cdot)ds\right\|_{\mathcal{E}}
\leq C
\|\mathcal{F}\|_{
\mathcal{L}^{\frac{p(\cdot)}{b+1}}_{\frac{nb}{(b+1)(2\alpha-
\langle 1 \rangle_\gamma
)},x}(L^\infty_t)
}. 
\end{equation}
\end{Proposition}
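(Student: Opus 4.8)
The plan is to exploit the structure $f=\vn^{\gamma}(\mathcal{F})$ to transfer the derivative (in the convection case $\gamma=1$) onto the fractional heat kernel, and then to dominate the full space–time convolution pointwise by a single spatial Riesz potential applied to the time–supremum of $\mathcal{F}$. Throughout I set $\beta:=2\alpha-\langle 1\rangle_\gamma$, so that $\beta=2\alpha$ when $\gamma=0$ and $\beta=2\alpha-1$ when $\gamma=1$. First I would write $\int_0^t\mathfrak{g}_{t-s}^\alpha\ast(\vn^{\gamma}\mathcal{F})(s,\cdot)\,ds=\int_0^t(\vn^{\gamma}\mathfrak{g}_{t-s}^\alpha)\ast\mathcal{F}(s,\cdot)\,ds$ and bound the kernel by the decay estimates of Remark \ref{remark 2.1 MIAO}.

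Bounding $|\mathcal{F}(s,y)|$ by $g(y):=\|\mathcal{F}(\cdot,y)\|_{L^\infty_t}$ and substituting $\sigma=t-s$, the crucial step is the time integration of the kernel. Using $|\mathfrak{g}_\sigma^\alpha(z)|\le C\,\sigma(\sigma^{\frac{1}{2\alpha}}+|z|)^{-(n+2\alpha)}$ when $\gamma=0$ and $|\nabla\mathfrak{g}_\sigma^\alpha(z)|\le C(\sigma^{\frac{1}{2\alpha}}+|z|)^{-(n+1)}$ when $\gamma=1$, the change of variables $u=\sigma^{\frac{1}{2\alpha}}$ followed by the scaling $u=|z|v$ reduces $\int_0^\infty|\vn^{\gamma}\mathfrak{g}_\sigma^\alpha(z)|\,d\sigma$ to a Beta-type integral multiplied by $|z|^{-(n-\beta)}$, the integral converging exactly in the range $0<\beta<n$; here the hypothesis $\alpha>\tfrac12$ is what secures $\beta>0$ in the case $\gamma=1$, making $|z|^{-(n-\beta)}$ a genuine Riesz kernel. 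This yields the pointwise domination
\[
\sup_{0\le t<T}\Big|\int_0^t\mathfrak{g}_{t-s}^\alpha\ast f(s,\cdot)(x)\,ds\Big|\le C\,\mathcal{I}_\beta(g)(x),
\]
so the whole estimate reduces to controlling a single Riesz potential of $g$ in the two norms defining $\|\cdot\|_{\mathcal{E}}$ in \eqref{Norm_PointFixe}.

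I would then close the two components separately. For the $L^{p(\cdot)}$ component I would apply Proposition \ref{Proposition_RieszPotential} with $P(\cdot)=p(\cdot)/(b+1)$, $Q=nb/\big((b+1)\beta\big)$ and the above $\beta$; a direct computation of the target exponent $r(\cdot)=nP(\cdot)/(n-\beta Q)$ collapses exactly to $p(\cdot)$, giving $\|\mathcal{I}_\beta(g)\|_{L^{p(\cdot)}}\le C\|g\|_{\mathcal{L}^{P(\cdot)}_Q}$. For the constant $L^{nb/\beta}$ component I would invoke the classical Hardy–Littlewood–Sobolev inequality (the constant-exponent instance of Theorem \ref{theo.PotentialRieszVariable0}): the source index $Q$ and target $nb/\beta$ satisfy $\frac{\beta}{nb}=\frac{1}{Q}-\frac{\beta}{n}$, while $\|g\|_{L^Q}\le\|g\|_{\mathcal{L}^{P(\cdot)}_Q}$ by the definition \eqref{MixedLebesgue} of the mixed norm. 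Since $\|g\|_{\mathcal{L}^{P(\cdot)}_Q}=\|\mathcal{F}\|_{\mathcal{L}^{\frac{p(\cdot)}{b+1}}_{\frac{nb}{(b+1)(2\alpha-\langle 1\rangle_\gamma)},x}(L^\infty_t)}$, taking the maximum of the two bounds yields precisely \eqref{Control_f_preambulo}.

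The step I expect to be the main obstacle is the pointwise reduction to the Riesz kernel: one must justify the Fubini interchange of the time integral with the spatial convolution, confirm that the time integral converges (the range $0<\beta<n$, with $\alpha>\tfrac12$ being essential when $\gamma=1$), and verify the hypothesis $0<\beta<\min\{n/P^+,n/Q\}$ of Proposition \ref{Proposition_RieszPotential}. The constraint involving $Q$ is automatic, since $n/Q=(b+1)\beta/b>\beta$, whereas the constraint involving $P^+$ amounts to a compatibility condition between $n$, $\alpha$, $b$ and $p^+$.
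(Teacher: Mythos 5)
Your proposal is correct and follows essentially the same route as the paper's proof: dominate the kernel (or its gradient) by the pointwise decay estimates of Remark \ref{remark 2.1 MIAO}, integrate out the time variable to produce the Riesz kernel $|x-y|^{-(n-\beta)}$ with $\beta=2\alpha-\langle 1\rangle_\gamma$, and then close the $L^{p(\cdot)}$ component via Proposition \ref{Proposition_RieszPotential} and the constant $L^{nb/\beta}$ component via the classical Hardy--Littlewood--Sobolev inequality. The only differences are presentational: you treat the two cases $\gamma=0,1$ in a unified way and you are more explicit than the paper about checking the exponent arithmetic $r(\cdot)=p(\cdot)$ and the admissibility condition $0<\beta<\min\{n/P^+,n/Q\}$, which is a welcome addition rather than a deviation.
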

\begin{proof}  
To conclude the estimate \eqref{Control_f_preambulo},  in the following we will study separately the cases when $\gamma=1$ and $\gamma=0$. 
\begin{itemize}
\item {\bf Case $\gamma=1$}.
We begin the analysis of this case by noticing that
since  $f=\vn^{\gamma}(\mathcal{F})$ we can write
$$\left|\int_{0}^t\mathfrak{g}_{t-s}^\alpha\ast 
 f
(s,x)ds\right|\leq C\int_{0}^t\int_{\R^n}|\vn\mathfrak{g}_{t-s}^\alpha(x-y)| |\mathcal{F}(s,y)|dyds.$$
Then, by Fubini's theorem
and the decay properties of the fractional heat kernel in Remark \ref{remark 2.1 MIAO}, we get 
\begin{eqnarray*}
\left|\int_{0}^t\mathfrak{g}_{t-s}^\alpha\ast  f
(s,x)ds\right|
&\leq &C\int_{\R^n}\int_{0}^t
\frac{1}{  
 (\ |t-s|^{ \frac{1}{
       2\alpha }   } +
|x-y| \ )^{n+1}
}
|\mathcal{F}(s,y)|dsdy.
\end{eqnarray*}
Now, by considering the $L^\infty_t$ norm on $\mathcal{F}$, 
we ob
$$\left|\int_{0}^t\mathfrak{g}_{t-s}^\alpha\ast 
 f
(s,x)ds\right|
\leq C\int_{\R^n}\int_{0}^t
\frac{1}{  
 (\ |t-s|^{ \frac{1}{
       2\alpha }   } +
|x-y| \ )^{n+1}
}
ds \|\mathcal{F}(\cdot,y)\|_{L^\infty_t}dy.$$
Then, as  $\alpha\in ]\frac 1 2, 1]$, we can consider the Riesz potential (see definition (\ref{Definition_RieszPotential}))
provided of the estimate
\begin{equation}\label{est 3.5}
\begin{aligned}
\int_0^t \frac{d s}{\left(|t-s|^{\frac{1}{2 \alpha}}+|x-y|\right)^{n+1}} & \leq \int_0^{+\infty} \frac{d s}{\left(s^{\frac{1}{2 \alpha}}+|x-y|\right)^{n+1}} \\
& =\int_0^{+\infty} \frac{|x-y|^{2 \alpha} d \beta}{\left(\left(|x-y|^{2 \alpha} \beta\right)^{\frac{1}{2 \alpha}}+|x-y|\right)^{n+1}}  
\\ & =\frac{1}{|x-y|^{n+1-2 \alpha}} 
\int_0^{+\infty} \frac{d \beta}{\left(1+\beta^{\frac{1}{2 \alpha}}\right)^{n+1}},
\end{aligned}
\end{equation}
in order to obtain 
$$\left|\int_{0}^t\mathfrak{g}_{t-s}^\alpha\ast 
 f
(s,x)ds\right|\leq C\int_{\R^n}\frac{1}{|x-y|^{n+1-2\alpha}} \|\mathcal{F}(\cdot,y)\|_{L^\infty_t}dy=
C
\mathcal{I}_{2\alpha-1}(\|\mathcal{F}(\cdot,\cdot)\|_{L^\infty_t})(x),$$
and then, we can write 
\begin{equation}\label{eq.24.janv}
\left\|\int_{0}^t\mathfrak{g}_{t-s}^\alpha\ast  f(s,x)ds\right\|_{L^\infty_t}\leq C\mathcal{I}_{2\alpha-1}(\|\mathcal{F}(\cdot,\cdot)\|_{L^\infty_t})(x).
\end{equation}
Now, to reconstruct the $\mathcal{L}^{p(\cdot)}_{\frac{nb}{2\alpha-1}}$-norm given in (\ref{Norm_PointFixe}), from the estimate \eqref{eq.24.janv} we write 
\begin{eqnarray*}
\left\|\int_{0}^t\mathfrak{g}_{t-s}^\alpha\ast  f(s,x)ds\right\|_{L^{p(\cdot)}_x(L^\infty_t)}&\leq &C\left\|\mathcal{I}_{2\alpha-1}(\|\mathcal{F}(\cdot,\cdot)\|_{L^\infty_t})(\cdot)\right\|_{L^{p(\cdot)}_x},
\end{eqnarray*}
and 
\begin{eqnarray*}
\left\|\int_{0}^t\mathfrak{g}_{t-s}^\alpha\ast  f(s,x)ds\right\|_{L^{\frac{nb}{2\alpha-1}}_x(L^\infty_t)}&\leq &C\left\|\mathcal{I}_{2\alpha-1}(\|\mathcal{F}(\cdot,\cdot)\|_{L^\infty_t})(\cdot)\right\|_{L^{\frac{nb}{2\alpha-1}}_x}.
\end{eqnarray*}
Thus, by Proposition \ref{Proposition_RieszPotential} we get 
\begin{equation}\label{eq 2 24 janv}
\left\|\mathcal{I}_{2\alpha-1}(\|\mathcal{F}(\cdot,\cdot)\|_{L^\infty_t})(\cdot)\right\|_{L^{p(\cdot)}_x}\leq C\left\|\|\mathcal{F}(\cdot,\cdot)\|_{L^\infty_t}\right\|_{\mathcal{L}^{\frac{p(\cdot)}{b+1}}_{\frac{nb}{(b+1)(2\alpha-1)},x}}=\|\mathcal{F}\|_{\mathcal{L}^{\frac{p(\cdot)}{b+1}}_{\frac{nb}{(b+1)(2\alpha-1)},x}(L^\infty_t)}.
\end{equation}
Note that, since the Riesz potentials  are bounded on space $L^{\frac{nb}{2\alpha-1}}$,  we obtain 
\begin{equation}\label{eq 3 24 janv}
\left\|\mathcal{I}_{2\alpha-1}(\|\mathcal{F}(\cdot,\cdot)\|_{L^\infty_t})(\cdot)\right\|_{L^{\frac{nb}{2\alpha-1}}_x}\leq C\left\|\|\mathcal{F}\|_{L^\infty_t}\right\|_{L^{\frac{nb}{(b+1)(2\alpha-1)}}_x}=\|\mathcal{F}\|_{L^{\frac{nb}{(b+1)(2\alpha-1)}}_x(L^\infty_t)}.
\end{equation}
Then, 
gathering together 
the definition of the norm $\|\cdot\|_{\mathcal{E}}$ given in (\ref{Norm_PointFixe})
with the estimates 
\eqref{eq 2 24 janv} and
\eqref{eq 3 24 janv}, we conclude  
\begin{equation}\label{vase gamma 1 f}
   \left\|\int_{0}^t\mathfrak{g}_{t-s}^\alpha\ast  f(s,x)ds\right\|_{
\mathcal{L}^{ {p(\cdot)} }_{\frac{nb}{ (2\alpha-1)},x}(L^\infty_t)
   } 
\leq C
\|\mathcal{F}\|_{
\mathcal{L}^{\frac{p(\cdot)}{b+1}}_{\frac{nb}{(b+1)(2\alpha-1)},x}(L^\infty_t)
}
. 
\end{equation}

\item {\bf Case $\gamma=0$}.
Similarly to the previous case, if we consider Fubini's theorem
and the decay properties of the fractional heat kernel in Remark \ref{remark 2.1 MIAO}, we can write 
\begin{eqnarray*}
\left|
\int_{0}^t \mathfrak{g}^\alpha_{t-s}\ast   \mathcal{F}  (s, x) ds
\right|
&\leq &
C\int_{\mathbb{R}^n}\int_{0}^t
\frac{|t-s|}{   (\ |t-s|^{ \frac{1}{
       2\alpha }   } +
|x-y| \ )^{n+2\alpha}
}
 |\mathcal{F}(s,y)|dsdy,
\end{eqnarray*}
and then, involving the $L^\infty_t$-norm on the previous estimate,  we conclude  
\begin{eqnarray*}
\left|
\int_{0}^t
\mathfrak{g}^\alpha_{t-s}\ast    \mathcal{F} (s, x) 
ds
\right|
&\leq & 
C\int_{\R^n}
\left(\int_{0}^t 
\frac{|t-s|}{  
 (\ |t-s|^{ \frac{1}{
       2\alpha }   } +
|x-y| \ )^{n+2\alpha}
}
ds\right)
\|\mathcal{F}(\cdot,y)\|_{L^\infty_t}
dy.
\end{eqnarray*}
Now, considering  the estimate
\begin{equation}\label{est 3.10}
\begin{aligned}
\int_0^t \frac{ |t-s| ds}{\left(|t-s|^{\frac{1}{2 \alpha}}+|x-y|\right)^{n+2\alpha}} 
& 
\leq 
{\color{black}
\int_0^{+\infty} \frac{s d s}{\left(s^{\frac{1}{2 \alpha}}+|x-y|\right)^{n+2 \alpha}}
} \\
& =\int_0^{+\infty} \frac{|x-y|^{2 \alpha}\beta |x-y|^{2 \alpha}d \beta}{\left(\left(|x-y|^{2 \alpha} \beta\right)^{\frac{1}{2 \alpha}}+|x-y|\right)^{n+2\alpha}}  \\
&=\frac{1}{|x-y|^{n-2 \alpha}} 
\int_0^{+\infty} \frac{\beta d \beta}{\left(1+\beta^{\frac{1}{2 \alpha}}\right)^{n+2\alpha}},
\end{aligned}
\end{equation}
by the definition of the Riesz potential (see definition (\ref{Definition_RieszPotential})), we obtain 
\begin{eqnarray*}
\left|
\int_{0}^t
\mathfrak{g}^\alpha_{t-s}\ast    \mathcal{F}  (s, x) 
ds
\right|
&\leq & 
C\int_{\R^n}
\frac{1}{|x-y|^{n-2 \alpha}} 
\| \mathcal{F} (\cdot,y)\|_{L^\infty_t}
dy
\\
&
=
&
C
\mathcal{I}_{2\alpha}(
\| \mathcal{F} (\cdot,y)\|_{L^\infty_t}
)(x).
\end{eqnarray*} 
This last estimate implies 
\begin{equation}\label{eq.24.janvA}
\left\|\int_{0}^t\mathfrak{g}_{t-s}^\alpha\ast  f(s,x)ds\right\|_{L^\infty_t}\leq C\mathcal{I}_{2\alpha}(\|\mathcal{F}(\cdot,\cdot)\|_{L^\infty_t})(x).
\end{equation}
In order  to reconstruct the $\mathcal{L}^{p(\cdot)}_{\frac{nb}{2\alpha}}$-norm given in (\ref{Norm_PointFixe}), from the estimate \eqref{eq.24.janvA} we write 
\begin{eqnarray*}
\left\|\int_{0}^t\mathfrak{g}_{t-s}^\alpha\ast  f(s,x)ds\right\|_{L^{p(\cdot)}_x(L^\infty_t)}&\leq &C\left\|\mathcal{I}_{2\alpha}(\|\mathcal{F}(\cdot,\cdot)\|_{L^\infty_t})(\cdot)\right\|_{L^{p(\cdot)}_x},
\end{eqnarray*}
and 
\begin{eqnarray*}
\left\|\int_{0}^t\mathfrak{g}_{t-s}^\alpha\ast  f(s,x)ds\right\|_{L^{\frac{nb}{2\alpha}}_x(L^\infty_t)}&\leq &C\left\|\mathcal{I}_{2\alpha}(\|\mathcal{F}(\cdot,\cdot)\|_{L^\infty_t})(\cdot)\right\|_{L^{\frac{nb}{2\alpha}}_x}.
\end{eqnarray*}
Now, considering Proposition \ref{Proposition_RieszPotential}, we get the estimate
\begin{eqnarray}\label{eq 2 avril} 
\left\|
\int_{0}^t \mathfrak{g}^\alpha_{t-s}\ast  f  (s, x) 
ds
\right\|_{L^{p(\cdot)}_x(L^\infty_t)}
&\leq & 
C \left\|
\|\mathcal{F}\|_{L^\infty_t}\right\|_{\mathcal{L}^{\frac{p(\cdot)}{b+1}}_{   \frac{nb}{(b+1)(2\alpha )}     }   }
=
C
\|
\mathcal{F}
\|_{\mathcal{L}^{\frac{p(\cdot)}{b+1}}_{   \frac{nb}{(b+1)(2\alpha )}      ,x}(L^\infty_x)} 
,
\end{eqnarray}
and, since the Riesz potential $\mathcal{I}_{2\alpha }$  satisfies $\|\mathcal{I}_{2\alpha}(\varphi)\|_{L^{\frac{nb}{2\alpha }}}\leq C\|\varphi\|_{L^{\frac{nb}{(b+1)(2\alpha )}}  }$, we obtain 
\begin{eqnarray}\label{eq 3 avril} 
\left\|
\int_{0}^t \mathfrak{g}^\alpha_{t-s}\ast  f (s, x) 
ds
\right\|_{L^\frac{n b }{2\alpha }_x(L^\infty_t)}
&\leq &
C\left\| \| \mathcal{F} \|_{L^\infty_t}\right\|_{L^{ \frac{nb}{(b+1)(2\alpha )}   }_x} 
= 
C
\|\mathcal{F} \|_{L^\frac{nb}{(b+1)(2\alpha )}_x(L^\infty_t)}.
\end{eqnarray}
Then, 
gathering together 
the norm $\|\cdot\|_{\mathcal{E}}$ given in (\ref{Norm_PointFixe})
with the estimates 
\eqref{eq 2 avril} and
\eqref{eq 3 avril} , we get 
\begin{equation}\label{vase gamma 0 f}
    \left\|\int_{0}^t\mathfrak{g}_{t-s}^\alpha\ast  f(s,x)ds\right\|_{
    \mathcal{L}^{ {p(\cdot)} }_{\frac{nb}{ (2\alpha)},x}(L^\infty_t)
    } 
\leq C
\|\mathcal{F}\|_{
\mathcal{L}^{\frac{p(\cdot)}{b+1}}_{\frac{nb}{(b+1)(2\alpha)},x}(L^\infty_t)
}
.
\end{equation}

\end{itemize}

Considering the estimates \eqref{vase gamma 1 f} and \eqref{vase gamma 0 f}  we conclude 
\eqref{Control_f_preambulo} and thus the proof.

\end{proof}

\begin{Proposition}\label{prop.Control_Nolineal2}
Consider $\alpha\in ]\frac 1 2, 1]$ and a variable exponent $p(\cdot)\in \mathcal{P}^{\log}(\R^n)$ such that $p^->1$. Then, 
 there exist a constant $C_{\mathcal{B}}>0$ such that 
\begin{equation}\label{Control_Nolineal1}
\left\|
\int_{0}^t \mathfrak{g}^\alpha_{t-s}\ast  \vn^{\gamma} (|u|^b u ) (s, x) ds
\right\|_{\mathcal{E}}\leq C_{\mathcal{B}}\|u\|^b_{\mathcal{E}}\|u\|_{\mathcal{E}}.
\end{equation}
\end{Proposition}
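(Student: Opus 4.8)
The plan is to observe that the nonlinear Duhamel term has exactly the same structure as the forcing term already treated in Proposition \ref{prop.Control_f_preambulo}, once one sets $\mathcal{F}=|u|^b u$. Indeed, writing the nonlinearity as $\vn^{\gamma}(|u|^b u)=\vn^{\gamma}(\mathcal{F})$, the estimate \eqref{Control_f_preambulo} would apply verbatim in both cases $\gamma=0$ and $\gamma=1$ (the only feature of $\mathcal{F}$ used there being its membership in the relevant mixed space), and would give
\[
\left\|\int_{0}^t \mathfrak{g}^\alpha_{t-s}\ast \vn^{\gamma}(|u|^b u)(s,x)\,ds\right\|_{\mathcal{E}}
\leq C\,\big\||u|^b u\big\|_{\mathcal{L}^{\frac{p(\cdot)}{b+1}}_{\frac{nb}{(b+1)(2\alpha-\langle 1 \rangle_\gamma)},x}(L^\infty_t)}.
\]
In this way the whole problem would reduce to controlling the mixed norm of $|u|^b u$ by the $(b+1)$-th power of $\|u\|_{\mathcal{E}}$.

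To carry this out, I would first dispose of the time variable. Since $\big||u(\cdot,x)|^b u(\cdot,x)\big|=|u(\cdot,x)|^{b+1}$ and the supremum in $t$ commutes with taking a power, one has the pointwise-in-$x$ identity $\big\||u(\cdot,x)|^b u(\cdot,x)\big\|_{L^\infty_t}=\|u(\cdot,x)\|_{L^\infty_t}^{\,b+1}$, so it remains to bound the purely spatial quantity $\big\|\,\|u\|_{L^\infty_t}^{\,b+1}\big\|_{\mathcal{L}^{\frac{p(\cdot)}{b+1}}_{\frac{nb}{(b+1)(2\alpha-\langle 1 \rangle_\gamma)}}}$. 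For this I would invoke the generalized H\"older inequality in mixed Lebesgue spaces stated in Remark \ref{Rem_Holder_Mixed_Lebesgue_Var}, applied iteratively to the product of $b+1$ equal factors $\|u\|_{L^\infty_t}$. The exponent bookkeeping is consistent because
\[
\frac{1}{p(\cdot)/(b+1)}=\frac{b+1}{p(\cdot)}
\qquad\text{and}\qquad
\frac{(b+1)(2\alpha-\langle 1 \rangle_\gamma)}{nb}=(b+1)\,\frac{2\alpha-\langle 1 \rangle_\gamma}{nb},
\]
that is, both the variable and the constant exponents split into exactly $b+1$ copies of $\frac{1}{p(\cdot)}$ and of $\frac{1}{nb/(2\alpha-\langle 1 \rangle_\gamma)}$ respectively. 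This would produce
\[
\big\|\,\|u\|_{L^\infty_t}^{\,b+1}\big\|_{\mathcal{L}^{\frac{p(\cdot)}{b+1}}_{\frac{nb}{(b+1)(2\alpha-\langle 1 \rangle_\gamma)}}}
\leq C\,\big\|\,\|u\|_{L^\infty_t}\big\|_{\mathcal{L}^{p(\cdot)}_{\frac{nb}{2\alpha-\langle 1 \rangle_\gamma}}}^{\,b+1}
= C\,\|u\|_{\mathcal{E}}^{\,b+1}.
\]

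Combining the two displays and rewriting $\|u\|_{\mathcal{E}}^{\,b+1}=\|u\|_{\mathcal{E}}^{\,b}\|u\|_{\mathcal{E}}$ would give \eqref{Control_Nolineal1} with $C_{\mathcal{B}}=C$. The step I expect to be the main obstacle is the legitimacy of the iterated H\"older inequality: it requires the \emph{divided} target exponents to be admissible, namely $p(\cdot)/(b+1)\geq 1$ and $\frac{nb}{(b+1)(2\alpha-\langle 1 \rangle_\gamma)}\geq 1$, which silently constrains $p^-$ from below (in line with the condition $b+1<p^-$ appearing in Theorem \ref{Theoreme_1}), and it also demands that the numerical constant coming from the $b$-fold use of Remark \ref{Rem_Holder_Mixed_Lebesgue_Var} remain uniform. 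Setting aside the measurability of $|u|^b u$ and these admissibility constraints, the remainder is a direct transcription of the kernel computation already performed in Proposition \ref{prop.Control_f_preambulo}.
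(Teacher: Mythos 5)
Your argument is correct and follows essentially the same route as the paper: the paper simply re-derives for the nonlinear term the kernel-decay and Riesz-potential estimates of Proposition \ref{prop.Control_f_preambulo} (via \eqref{est 3.5}, \eqref{est 3.10} and Proposition \ref{Proposition_RieszPotential}) and then concludes with exactly the mixed-space H\"older factorization of $\|u\|_{L^\infty_t}^{b+1}$ that you use. Your reduction to Proposition \ref{prop.Control_f_preambulo} with $\mathcal{F}=|u|^b u$ is just a cleaner packaging of the identical computation, and the admissibility caveat you flag (the divided exponents $p(\cdot)/(b+1)$ and $nb/((b+1)(2\alpha-\langle 1\rangle_\gamma))$ must stay $\geq 1$) is indeed left implicit in the paper's proof as well.
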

\begin{proof}
In order to  conclude \eqref{Control_Nolineal1} in the next  we will study separately the cases when $\gamma=1$ and $\gamma=0$. 
\begin{itemize}

\item {\bf Case $\gamma=1$}.

We start the study of this case by noticing that, due the Minkowski’s integral inequality and  decay properties of the fractional heat kernel (see Remark \ref{remark 2.1 MIAO}), 
we can write 
\begin{eqnarray*}
\left|
\int_{0}^t \mathfrak{g}^\alpha_{t-s}\ast   \vec{1}\cdot \vn  (|u|^b u ) (s, x) ds
\right|
&\leq &
C\int_{0}^t\int_{\mathbb{R}^n}|\vn\mathfrak{g}_{t-s}^\alpha (x-y)|     |u(s,y)|^b  |u(s,y)|dyds
\\
&\leq &
C\int_{\mathbb{R}^n}\int_{0}^t
\frac{1}{   (\ |t-s|^{ \frac{1}{
       2\alpha }   } +
|x-y| \ )^{n+1}
}
|u(s,y)|^b |u(s,y)|dsdy.
\end{eqnarray*}
Then, considering the $L^\infty_t$-norm on the last inequality we obtain 
\begin{multline*}
\left|
\int_{0}^t
\mathfrak{g}^\alpha_{t-s}\ast   \vec{1}\cdot \vn  (|u|^b u ) (s, x) 
ds
\right|
\\\leq 
C\int_{\R^n}
\left(\int_{0}^t 
\frac{1}{  
 (\ |t-s|^{ \frac{1}{
       2\alpha }   } +
|x-y| \ )^{n+1}
}
ds\right)
\|u(\cdot,y)\|_{L^\infty_t}^b
\|u(\cdot,y)\|_{L^\infty_t}
dy.
\end{multline*}
Inspired by the proof of Proposition \ref{prop.Control_f_preambulo}  (see  \eqref{est 3.5}),  we can write 
\begin{equation}
\begin{aligned}
\int_0^t \frac{d s}{\left(|t-s|^{\frac{1}{2 \alpha}}+|x-y|\right)^{n+1}} 
& \leq  C \frac{1}{|x-y|^{n+1-2 \alpha}} 
\end{aligned}
\end{equation}
and then, we conclude the estimate
\begin{equation}
\begin{aligned}
\left|
\int_{0}^t
\mathfrak{g}^\alpha_{t-s}\ast   \vec{1}\cdot \vn  (|u|^b u ) (s, x) 
ds
\right|
&\leq & 
C
\int_{\mathbb{R}^n}
\frac{1}{|x-y|^{n+1-2 \alpha}} 
\|u(\cdot,y)\|_{L^\infty_t}^b
\|u(\cdot,y)\|_{L^\infty_t}
dy.
\end{aligned}
\end{equation}
Considering the definition of the Riesz Potential (see
\ref{Definition_RieszPotential}), the last inequality can be recast as 
$$\left|
\int_{0}^t \mathfrak{g}^\alpha_{t-s}\ast   \vec{1}\cdot \vn  (|u|^b u ) (s, x) 
ds
\right|
\leq C
\mathcal{I}_{2\alpha -1}
\big(\|u\|_{L^\infty_t}^b\|u\|_{L^\infty_t}\big)
(x).$$
To reconstruct the norm $\|\cdot\|_{\mathcal{E}}$ (see \eqref{Norm_PointFixe}) we write
\begin{eqnarray*}
\left\|
\int_{0}^t \mathfrak{g}^\alpha_{t-s}\ast   \vec{1}\cdot \vn  (|u|^b u ) (s, x) 
ds
\right\|_{L^{p(\cdot)}_x(L^\infty_t)}&
\leq&
C
\|\mathcal{I}_{2\alpha-1}
(\|u\|_{L^\infty_t}^b\|u\|_{L^\infty_t})\|_{L^{p(\cdot)}_x}
,
\end{eqnarray*}
and
\begin{eqnarray*}
{\color{black}
\left\|
\int_{0}^t \mathfrak{g}^\alpha_{t-s}\ast   \vec{1}\cdot \vn  (|u|^b u ) (s, x) 
ds
\right\|_{L^{\frac{nb}{2\alpha -1}}_x(L^\infty_t)}
}
&\leq &
C\|\mathcal{I}_{2\alpha-1}
(\|u\|_{L^\infty_t}^b\|u\|_{L^\infty_t})\|_{L^{\frac{nb}{2\alpha -  \langle1\rangle_\gamma  }}_x }.
\end{eqnarray*}
Then, considering that  $\|\mathcal{I}_{2\alpha-1}(\varphi)\|_{L^{\frac{nb}{2\alpha -  \langle1\rangle_\gamma  }}}\leq C\|\varphi\|_{L^{\frac{nb}{(b+1)(2\alpha -1)}}  }$, an H\"older inequality (see Remark \ref{Rem_Holder_Mixed_Lebesgue_Var}) and Proposition \ref{Proposition_RieszPotential}, we conclude 
\begin{equation}\label{eq AA}
\begin{aligned}
\left\|
\int_{0}^t \mathfrak{g}^\alpha_{t-s}\ast   \vec{1}\cdot \vn  (|u|^b u ) (s, x) 
ds
\right\|_{L^{p(\cdot)}_x(L^\infty_t)}
&\leq & 
C \left\|
\|u\|^b_{L^\infty_t}\|u\|_{L^\infty_t}\right\|_{\mathcal{L}^{\frac{p(\cdot)}{b+1}}_{   \frac{nb}{(b+1)(2\alpha -1)}     }   }\\
&\leq & 
C\|u\|^b_{\mathcal{L}^{p(\cdot)}_{   \frac{nb}{2\alpha -  \langle1\rangle_\gamma  }   ,x}(L^\infty_x)}
\|
u
\|_{\mathcal{L}^{p(\cdot)}_{   \frac{nb}{2\alpha -  \langle1\rangle_\gamma  }      ,x}(L^\infty_x)} 
,
\end{aligned}
\end{equation}
and
\begin{equation}\label{eq BB}
\begin{aligned}
\left\|
\int_{0}^t \mathfrak{g}^\alpha_{t-s}\ast   \vec{1}\cdot \vn  (|u|^b u ) (s, x) 
ds
\right\|_{L^\frac{n b }{2\alpha -1}_x(L^\infty_t)}
&\leq &
C\left\|\|u\|^b_{L^\infty_t}\|u\|_{L^\infty_t}\right\|_{L^{ \frac{nb}{(b+1)(2\alpha -1)}   }_x}
\\
&\leq & 
C\|u\|^b_{L^\frac{nb}{2\alpha -  \langle1\rangle_\gamma  }_x(L^\infty_t)}\|u\|_{L^\frac{nb}{2\alpha -  \langle1\rangle_\gamma  }_x(L^\infty_t)}.
\end{aligned}
\end{equation}
\begin{Remarque}\label{Rem_Riesz_MixedLebesgue}
Note that, in the case that we had considered Theorem \ref{theo.PotentialRieszVariable0} instead of Proposition \ref{Proposition_RieszPotential}, we had obtained an estimate of the form 
$\|\mathcal{I}_{2\alpha-1}(\varphi)\|_{L^{p(\cdot)}}\leq \|\varphi\|_{L^{\frac{p(\cdot)}{b+1}}}$, 
which, due the strong relationship between the the variable exponents involved, yields the constant exponent
$p(\cdot)\equiv \frac{nb}{2\alpha-1}$.
\end{Remarque}
Gathering together the estimates 
\eqref{eq AA}, 
\eqref{eq BB}, and using the definition of the norm $\|\cdot\|_{\mathcal{E}}$ given in (\ref{Norm_PointFixe}) we get the estimate  
\begin{equation}\label{vase gamma 1 ff}
   \left\|\int_{0}^t\mathfrak{g}_{t-s}^\alpha\ast  \vec{1}\cdot \vn  (|u|^b u ) 
   (s,x)ds\right\|_{
\mathcal{L}^{ {p(\cdot)} }_{\frac{nb}{ (2\alpha-1)},x}(L^\infty_t)
   } 
\leq C
\|u\|^b_{
\mathcal{L}^{ {p(\cdot)} }_{\frac{nb}{ (2\alpha-1)},x}(L^\infty_t)
}
\|u\|_{
\mathcal{L}^{ {p(\cdot)} }_{\frac{nb}{ (2\alpha-1)},x}(L^\infty_t)
}.
\end{equation}

\item {\bf Case $\gamma=0$}. By considering the Minkowski’s integral inequality and the  
  decay properties of the fractional heat kernel (see Remark \ref{remark 2.1 MIAO}), 
    we get the estimates 
\begin{eqnarray*}
\left|
\int_{0}^t \mathfrak{g}^\alpha_{t-s}\ast  (|u|^b u ) (s, x) ds
\right|
&\leq &
C\int_{\mathbb{R}^n}\int_{0}^t
\frac{|t-s|}{   (\ |t-s|^{ \frac{1}{
       2\alpha }   } +
|x-y| \ )^{n+2\alpha}
}
|u(s,y)|^b |u(s,y)|dsdy.
\end{eqnarray*}
Considering $L^\infty_t$-norm  we conclude 
\begin{multline*}
\left|
\int_{0}^t
\mathfrak{g}^\alpha_{t-s}\ast   (|u|^b u ) (s, x) 
ds
\right|
\\\leq 
C\int_{\R^n}
\left(\int_{0}^t 
\frac{|t-s|}{  
 (\ |t-s|^{ \frac{1}{
       2\alpha }   } +
|x-y| \ )^{n+2\alpha}
}
ds\right)
\|u(\cdot,y)\|_{L^\infty_t}^b
\|u(\cdot,y)\|_{L^\infty_t}
dy.
\end{multline*}
Now, considering the definition of the Riesz potential, and the estimate (see \eqref{est 3.10})
\begin{equation}
\begin{aligned}
\int_0^t \frac{ |t-s| ds}{\left(|t-s|^{\frac{1}{2 \alpha}}+|x-y|\right)^{n+2\alpha}} 
&\leq \frac{1}{|x-y|^{n-2 \alpha}} 
\int_0^{+\infty} \frac{\beta d \beta}{\left(1+\beta^{\frac{1}{2 \alpha}}\right)^{n+2\alpha}},
\end{aligned}
\end{equation}
we obtain the estimate 
\begin{eqnarray*}
\left|
\int_{0}^t
\mathfrak{g}^\alpha_{t-s}\ast   (|u|^b u ) (s, x) 
ds
\right|
&\leq & 
C\int_{\R^n}
\frac{1}{|x-y|^{n-2 \alpha}} 
\|u(\cdot,y)\|_{L^\infty_t}^b
\|u(\cdot,y)\|_{L^\infty_t}
dy
\\
&
=
&
C
\mathcal{I}_{2\alpha}(
\|u(\cdot,y)\|_{L^\infty_t}^b
\|u(\cdot,y)\|_{L^\infty_t}
)(x).
\end{eqnarray*} 
In order o reconstruct the norm $\|\cdot\|_{\mathcal{E}}$ given in \eqref{Norm_PointFixe} we can  write
\begin{eqnarray*}
\left\|
\int_{0}^t \mathfrak{g}^\alpha_{t-s}\ast  (|u|^b u ) (s, x) 
ds
\right\|_{L^{p(\cdot)}_x(L^\infty_t)}&
\leq&
C
\|\mathcal{I}_{2\alpha}
(\|u\|_{L^\infty_t}^b\|u\|_{L^\infty_t})\|_{L^{p(\cdot)}_x}
,
\end{eqnarray*}
and
\begin{eqnarray*}
\left\|
\int_{0}^t \mathfrak{g}^\alpha_{t-s}\ast  (|u|^b u ) (s, x) 
ds
\right\|_{L^\frac{n b }{2\alpha }_x(L^\infty_t)}
&\leq &
C\|\mathcal{I}_{2\alpha}
(\|u\|_{L^\infty_t}^b\|u\|_{L^\infty_t})\|_{L^{\frac{nb}{2\alpha}}_x }.
\end{eqnarray*}

If we consider an H\"older inequality (see Remark \ref{Rem_Holder_Mixed_Lebesgue_Var}),  Proposition \ref{Proposition_RieszPotential} and the fact that the Riesz potential is bounded in ${L^{\frac{nb}{(b+1)(2\alpha )}}  }$, we get the estimates 
\begin{equation}\label{eq A}
\begin{aligned}
\left\|
\int_{0}^t \mathfrak{g}^\alpha_{t-s}\ast  (|u|^b u ) (s, x) 
ds
\right\|_{L^{p(\cdot)}_x(L^\infty_t)}
&\leq & 
C \left\|
\|u\|^b_{L^\infty_t}\|u\|_{L^\infty_t}\right\|_{\mathcal{L}^{\frac{p(\cdot)}{b+1}}_{   \frac{nb}{(b+1)(2\alpha )}     }   }
\\
&\leq&
C\|u\|^b_{\mathcal{L}^{p(\cdot)}_{   \frac{nb}{2\alpha }   ,x}(L^\infty_x)}
\|
u
\|_{\mathcal{L}^{p(\cdot)}_{   \frac{nb}{2\alpha }      ,x}(L^\infty_x)} 
,
\end{aligned}
\end{equation}

and
\begin{equation}\label{eq B}
\begin{aligned}
\left\|
\int_{0}^t \mathfrak{g}^\alpha_{t-s}\ast  (|u|^b u ) (s, x) 
ds
\right\|_{L^\frac{n b }{2\alpha }_x(L^\infty_t)}
&\leq &
C\left\|\|u\|^b_{L^\infty_t}\|u\|_{L^\infty_t}\right\|_{L^{ \frac{nb}{(b+1)(2\alpha )}   }_x}
\\ &\leq &
C\|u\|^b_{L^\frac{nb}{2\alpha }_x(L^\infty_t)}\|u\|_{L^\frac{nb}{2\alpha }_x(L^\infty_t)}.
\end{aligned}
\end{equation}
Thus, gathering 
\eqref{eq A}, 
\eqref{eq B}, and using the definition of the norm $\|\cdot\|_{\mathcal{E}}$ given in (\ref{Norm_PointFixe}), we obtain 
\begin{equation}\label{vase gamma 0 ff}
   \left\|\int_{0}^t\mathfrak{g}_{t-s}^\alpha\ast  (|u|^b u ) 
   (s,x)ds\right\|_{
\mathcal{L}^{ {p(\cdot)} }_{\frac{nb}{ (2\alpha)},x}(L^\infty_t)
   } 
\leq C
\|u\|^b_{
\mathcal{L}^{ {p(\cdot)} }_{\frac{nb}{ (2\alpha)},x}(L^\infty_t)
}
\|u\|_{
\mathcal{L}^{ {p(\cdot)} }_{\frac{nb}{ (2\alpha)},x}(L^\infty_t)
}.
\end{equation}

\end{itemize}

Considering the estimates \eqref{vase gamma 1 ff} and \eqref{vase gamma 0 ff}  we conclude 
\eqref{Control_Nolineal1}, and then  Proposition \ref{prop.Control_Nolineal} is proven.

\end{proof}

\subsection*{End of the proof of Theorem \ref{Theoreme_2}}
Note that, the integral equation \eqref{NS_Integral} can be rewritten as $ u= \Psi (u)$, where $\Psi$ is given by
\begin{equation*}
    \Psi (u)
    :=
    \mathfrak{g}^\alpha_t\ast u_0(x)
+
\int_{0}^t
\mathfrak{g}^\alpha_{t-s}\ast\vf(s, x)
ds
-
\int_{0}^t
\mathfrak{g}^\alpha_{t-s}\ast 
\vn^{\gamma} (|u|^b u )
(s, x)
ds.
\end{equation*}
 The operator $\Psi$ is seen as a map from the space $\mathcal{E}$ into itself. Let us denote by
\begin{equation*}
    B = C\|u_0\|_{\mathcal{L}^{p(\cdot)}_{   \frac{nb}{2\alpha -  \langle1\rangle_\gamma  }        } }
    +
C
\|\mathcal{F}\|_{
\mathcal{L}^{\frac{p(\cdot)}{b+1}}_{\frac{nb}{(b+1)(2\alpha-
\langle 1 \rangle_\gamma
)},x}(L^\infty_t)
} ,
\end{equation*}
and set 
{\color{black}$R= 2B$}. With these notations at hand, now we consider the ball 
\begin{equation*}
    B_{R}
    =
    \left\{
     u \in \mathcal{E}\, :\,
     \|
    u
     \|_{
\mathcal{E}
    }
    \leq R
    \right\}.
\end{equation*}
In the next, we  will prove that if $B$ is bounded by an appropriate constant, then the operator $\Psi$ is a contraction map on $B_R$.
To this end, in the next we consider $u,\ v \in B_R$ and a fixed parameter $\gamma \in \{0,1\}$. 
Thus, by the estimate obtained in Proposition   
 \ref{prop.Control_Nolineal2},
we obtain the following inequalities
\begin{equation}\label{26fevr3}
    \begin{split}
    \|\Psi (u)
    -
    \Psi ( v) \|_{
\mathcal{E}}
&    \leq 
    C
    \|
    u
    -
     v \|_{
\mathcal{E}
    }
    \left(
 \|
    u
     \|^b_{
\mathcal{E}
    }
    +
     \|
     v
     \|^b_{
\mathcal{E}
    }
    \right)
 \\ &   \leq 
    CR^b 
     \|
    u
    -
     v \|_{
\mathcal{E}
    }
    .
 \end{split}
\end{equation}
To continue, by Propositions  \ref{prop.Control_Uo} and \ref{prop.Control_f_preambulo},
we can write 
\begin{equation}\label{26fevr4}
    \begin{split}
    \|\Psi (u)
 \|_{
\mathcal{E}}
&
    \leq 
    \|\Psi (u) - \Psi (0) + \Psi (0)
 \|_{
\mathcal{E}}
\\
&
    \leq 
    \|\Psi (u) - \Psi (0) 
 \|_{
\mathcal{E}}
+
    \| \Psi (0)
 \|_{
\mathcal{E}}
\\
&
\leq 
CR^b  \| u
 \|_{
\mathcal{E}} +
B. 
 \end{split}
\end{equation}
Now, if $2CB \leq \frac{1}{2}$, then $CR^b \leq  \frac{1}{2} $, and thus, considering \eqref{26fevr3} and \eqref{26fevr4}, we conclude  
\begin{equation*}
        \|\Psi (u)
    -
    \Psi ( v) \|_{
\mathcal{E}} 
\leq
\frac{1}{2} \|
    u
    -
     v \|_{
\mathcal{E}
    }
\quad \text{and} \quad
        \|\Psi (u)
    \|_{
\mathcal{E}} 
\leq
R. 
\end{equation*}
Thus, it follows from the contraction mapping principle (stated in Proposition \ref{cmp}) that there exists a unique mild solution $u \in \mathcal{E}$ of the equation \eqref{NS_Intro}. 
With this we conclude the proof of Theorem \ref{Theoreme_2}.

\subsection{Proof of Theorem \ref{Theoreme_1}}
In this subsection we consider a variable exponent $p(\cdot)\in \mathcal{P}^{log}([0,+\infty[)$ such that $1<p^-\leq p^+<+\infty$, and the functional space 
$$
 E_T=
 L^{p(\cdot)} \left( [0,T], L^{q} (\Rt) \right),$$  
with $T\in ]0,+\infty[ $ to be precised later. 
The space $ E_T$ is dotted with the following Luxemburg-type norm: 
\begin{equation}\label{Norm_PointFixe_2}
\| \vec{\varphi}\|_{ E_T}=\inf\left\{\lambda > 0: \,\int_0^{T}\left|\frac{ \|
\vec{\varphi} (t,\cdot)\|_{L^{q}}}{\lambda}\right|^{p(t)} dt \leq1\right\}.
\end{equation}
In this subsection we attack the proof following the same strategy that in Theorem \ref{Theoreme_2}, we will  construct mild solutions for the integral equation  (\ref{NS_Integral}) by considering the 
mapping contraction principle on the space $E_T$. 
\\

\noindent To this end, we begin by proving the following result regarding a control of the initial data. 

\begin{Proposition}\label{prop.Control_Uo_Lplq}
Let $\alpha\in ]\frac 1 2, 1],\ p(\cdot )\in \mPl(\Rt)$ with $b+1<p^-\leq p^+ <+\infty$, fix an index $q>\frac{nb}{2\alpha -  \langle1\rangle_\gamma  }$ by the relationship
$\frac{\alpha b}{p(\cdot)}
+
\frac{nb}{2q}
<\alpha - \langle \frac{1}{2} \rangle_\gamma$
and $\overline{q}(\cdot)\in\mP^{\text{emb}}_q(\Rt)$.
Consider a function
$u_0\in L^{ \overline{q}(\cdot) } (\Rt)$. Then, there exist a constant $C>0$ such that 
\begin{equation}\label{Control_Uo_Lplq}
\|\mathfrak{g}_t^\alpha * u_0  \|_{  E_T}\leq C \| u_0  \|_{L^{
\overline{q}(\cdot)
}}.
\end{equation} 
\end{Proposition}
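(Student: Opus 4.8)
The plan is to reduce the whole estimate to a time-uniform bound at the level of the fixed space $L^{q}(\Rt)$ and then to absorb the time variable into the Luxemburg norm over the bounded interval $[0,T]$. The key observation is that, for the data term, no genuine time singularity has to be produced: the heat kernel is used only to preserve the spatial norm, so the scaling relation $\frac{\alpha b}{p(\cdot)}+\frac{nb}{2q}<\alpha-\langle\frac12\rangle_\gamma$ (which will be decisive for the nonlinear Duhamel term treated afterwards) plays no role here.

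First I would pass from the variable-exponent data space to the constant space $L^{q}$. Since by hypothesis $\overline{q}(\cdot)\in\mP^{\text{emb}}_q(\Rt)$, Lemma \ref{lema embeddd set} yields the embedding $L^{\overline{q}(\cdot)}(\Rt)\subset L^{q}(\Rt)$ together with $\|u_0\|_{L^{q}}\leq C\|u_0\|_{L^{\overline{q}(\cdot)}}$; in particular $u_0\in L^{q}(\Rt)$. Next I would invoke the fractional-heat smoothing estimate of Lemma \ref{Lemma 3.1 MIAO} in the diagonal case $p=q$: the exponent $-\frac{n}{2\alpha}\left(\frac1q-\frac1q\right)$ vanishes, so the semigroup is bounded on $L^{q}$ uniformly in time,
\[
\sup_{0<t<T}\|\mathfrak{g}_t^\alpha * u_0\|_{L^{q}}\leq C\|u_0\|_{L^{q}}\leq C\|u_0\|_{L^{\overline{q}(\cdot)}}.
\]
Hence the scalar map $t\mapsto \|\mathfrak{g}_t^\alpha * u_0\|_{L^{q}}$ is essentially bounded on $[0,T]$ by the desired right-hand side.

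Finally I would transfer this pointwise (in $t$) bound to the norm \eqref{Norm_PointFixe_2}. Using the lattice (ideal) property and the homogeneity of the Luxemburg norm, domination of the inner $L^{q}$ norm by the constant $A:=C\|u_0\|_{L^{\overline{q}(\cdot)}}$ gives
\[
\|\mathfrak{g}_t^\alpha * u_0\|_{E_T}=\big\|\,\|\mathfrak{g}_t^\alpha * u_0\|_{L^{q}}\,\big\|_{L^{p(\cdot)}([0,T])}\leq A\,\|1\|_{L^{p(\cdot)}([0,T])}.
\]
It then remains to check that the constant function has finite norm over the bounded interval: for $\lambda\geq 1$ one has $\int_0^{T}\lambda^{-p(t)}\,dt\leq T\lambda^{-p^-}$, which is $\leq 1$ as soon as $\lambda\geq \max\{1,T^{1/p^-}\}$, so $\|1\|_{L^{p(\cdot)}([0,T])}$ is finite and can be absorbed into $C$, yielding \eqref{Control_Uo_Lplq}. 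I expect the only delicate points to be the two standard Banach-function-space facts just used, namely that pointwise domination of the inner $L^{q}$ norm transfers to the outer variable-exponent norm, and that $\|1\|_{L^{p(\cdot)}([0,T])}<\infty$ (guaranteed by $|[0,T]|<\infty$ and $p^+<\infty$); the genuine analytic difficulty of the theorem is thus deferred to the nonlinear estimate rather than to this data bound.
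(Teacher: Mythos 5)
Your proposal is correct and follows essentially the same route as the paper: a uniform-in-time bound $\|\mathfrak{g}_t^\alpha * u_0\|_{L^q}\leq \|u_0\|_{L^q}$ (the paper gets it from Young's inequality with $\|\mathfrak{g}_t^\alpha\|_{L^1}=1$, you from the diagonal case of Lemma \ref{Lemma 3.1 MIAO}, which is the same fact), followed by multiplication with $\|1\|_{L^{p(\cdot)}([0,T])}$ and the embedding $L^{\overline{q}(\cdot)}\subset L^{q}$ of Lemma \ref{lema embeddd set}. Your hand computation $\int_0^T\lambda^{-p(t)}\,dt\leq T\lambda^{-p^-}$ simply reproves the bound the paper quotes as Lemma \ref{Lemma_subset}, so the two arguments coincide up to the order of the steps.
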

\begin{proof}
To deduce
the inequality \eqref{Control_Uo_Lplq},   we begin by considering the Young inequality to obtain 
$$\|\mathfrak{g}_t^\alpha * u_0  \|_{L^{q} (\Rt)} \leq \|\mathfrak{g}_t^\alpha   \|_{L^{1} (\Rt)}\| u_0  \|_{L^{q} (\Rt)} = \| u_0  \|_{L^{q} (\Rt)}.$$ 
Now, let recall the following result in the context of Variable Lebesgue spaces (see \cite[Lemma 3.2.12, Section 3.2]{Diening_Libro}).
\begin{Lemme}\label{Lemma_subset}
Let $p(\cdot)\in \mathcal{P}([0,+\infty[)$ such that $1<p^-\leq p^+<+\infty$. Then, there exist $C>0$ such that 
$$
\|1\|_{L^{p(\cdot)}([0,T])} \leq
C \max\left\{T^{\frac{1}{p^{-}}}, T^{\frac{1}{p^{+}}}\right\}.$$
\end{Lemme}
To continue, we consider  
 the $L^{p(\cdot)}_t$-norm on the last inequality to get 
\begin{eqnarray*}
\|\mathfrak{g}_t^\alpha * u_0  \|_{L^{p(\cdot)} _tL^{q}_x }
&\leq& C\|u_0\|_{L^{q} (\Rt)}\|1\|_{L^{p(\cdot)}([0,T])} 
\\
&\leq& C \max\left\{T^{\frac{1}{p^{-}}}, T^{\frac{1}{p^{+}}}\right\} \|u_0\|_{L^{q} (\Rt)}.
\end{eqnarray*}
Then, by Lemma \ref{lema embeddd set} we conclude the estimate
\begin{equation*}
\|\mathfrak{g}^\alpha_t *  u_0  \|_{L^{p(\cdot)} _tL^{q}_x }\leq C \max\left\{T^{\frac{1}{p^{-}}}, T^{\frac{1}{p^{+}}}\right\}
\| u_0\|_{L^{\overline{q}(\cdot)} (\Rt)}
.
\end{equation*}
With this we conclude the proof of Proposition \ref{prop.Control_Uo_Lplq}. 
\end{proof}
\begin{Proposition}\label{prop.Control_force_LpLq}
Let $\gamma \in \{0,1\}$ fixed.  Consider $\alpha\in (\frac 1 2, 1)$, a variable exponent $ p(\cdot )\in \mPl(\Rt)$ with $b+1<p^-\leq p^+ <+\infty$, fix an index $q>\frac{nb}{2\alpha -  \langle1\rangle_\gamma  }$ by the relationship
$\frac{\alpha b}{p(\cdot)}
+
\frac{nb}{2q}
<\alpha - \langle \frac{1}{2} \rangle_\gamma$
and $\overline{q}(\cdot)\in\mP^{\text{emb}}_q(\Rt)$.
Then, given a function
 $\vf\in L^{p(\cdot)} \left( [0,+\infty[,  L^{\overline{q}(\cdot)} (\R^3) \right)$, there exist a numerical constant $C>0$ such that
\begin{equation} \label{Control_force_LpLq}
\left\|\int_0^t\mathfrak{g}_{t-s}^\alpha *\vf(s, \cdot) ds\right\|_{ E_T} \leq 
C\|\vf \|_{L^1_tL^{\overline{q}(\cdot)}_x}.
\end{equation} 
\end{Proposition}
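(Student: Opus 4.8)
The plan is to follow the same three-step strategy employed in Proposition~\ref{prop.Control_Uo_Lplq}: first control the spatial $L^q$-norm of the Duhamel term uniformly in the time variable, then absorb the time integration through the constant-function estimate of Lemma~\ref{Lemma_subset}, and finally pass from $L^q$ to $L^{\overline{q}(\cdot)}$ by the embedding of Lemma~\ref{lema embeddd set}.

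First I would fix $t\in[0,T]$ and apply Minkowski's integral inequality in the $L^q_x$-norm (recall $q\in(1,\infty)$) to move the spatial norm inside the time integral:
\begin{equation*}
\left\|\int_0^t\mathfrak{g}_{t-s}^\alpha *\vf(s,\cdot)\,ds\right\|_{L^q_x}
\leq
\int_0^t\left\|\mathfrak{g}_{t-s}^\alpha *\vf(s,\cdot)\right\|_{L^q_x}\,ds.
\end{equation*}
Since $\|\mathfrak{g}_\tau^\alpha\|_{L^1}=1$ (equivalently, the first estimate of Lemma~\ref{Lemma 3.1 MIAO} with $p=q$), Young's inequality for convolution in the space variable yields $\|\mathfrak{g}_{t-s}^\alpha *\vf(s,\cdot)\|_{L^q_x}\leq\|\vf(s,\cdot)\|_{L^q_x}$, and bounding $\int_0^t$ by $\int_0^T$ produces a quantity independent of $t$:
\begin{equation*}
\left\|\int_0^t\mathfrak{g}_{t-s}^\alpha *\vf(s,\cdot)\,ds\right\|_{L^q_x}
\leq
\int_0^T\|\vf(s,\cdot)\|_{L^q_x}\,ds
=
\|\vf\|_{L^1_tL^q_x}.
\end{equation*}

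Next I would take the $L^{p(\cdot)}_t$-norm over $[0,T]$. Because the bound just obtained is constant in $t$, the monotonicity of the Luxemburg norm \eqref{Norm_PointFixe_2} reduces the time-norm to that of the constant function, which is controlled by Lemma~\ref{Lemma_subset}:
\begin{equation*}
\left\|\int_0^t\mathfrak{g}_{t-s}^\alpha *\vf(s,\cdot)\,ds\right\|_{E_T}
\leq
\|\vf\|_{L^1_tL^q_x}\,\|1\|_{L^{p(\cdot)}([0,T])}
\leq
C\max\left\{T^{1/p^-},T^{1/p^+}\right\}\|\vf\|_{L^1_tL^q_x}.
\end{equation*}
Finally, applying the embedding $L^{\overline{q}(\cdot)}(\Rn)\subset L^q(\Rn)$ of Lemma~\ref{lema embeddd set} pointwise in $s$ and integrating gives $\|\vf\|_{L^1_tL^q_x}\leq C\|\vf\|_{L^1_tL^{\overline{q}(\cdot)}_x}$; since $T$ is fixed, the factor $\max\{T^{1/p^-},T^{1/p^+}\}$ is absorbed into the constant and the claimed estimate \eqref{Control_force_LpLq} follows.

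The argument is essentially routine, so I do not anticipate a serious obstacle. The only point deserving care is the uniform-in-$t$ spatial bound: by estimating crudely through $\int_0^T$ rather than retaining the sharp time-decay of Lemma~\ref{Lemma 3.1 MIAO}, the time integration becomes trivial and the $L^{p(\cdot)}_t$-norm collapses onto $\|1\|_{L^{p(\cdot)}([0,T])}$, thereby sidestepping any singularity of the kernel in the time variable. The cost is the loss of scaling information, which is harmless for a local-in-time result.
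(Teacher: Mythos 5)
Your proposal is correct and follows essentially the same route as the paper's own proof: Minkowski/Young in the space variable with $\|\mathfrak{g}^\alpha_\tau\|_{L^1}=1$, a bound uniform in $t$ by extending the integral to $[0,T]$, Lemma \ref{Lemma_subset} for $\|1\|_{L^{p(\cdot)}([0,T])}$, and the embedding of Lemma \ref{lema embeddd set}. The only (immaterial) difference is that the paper applies the embedding $L^{\overline{q}(\cdot)}\subset L^q$ inside the time integral before taking the $L^{p(\cdot)}_t$-norm, whereas you defer it to the last step.
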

\begin{proof}
We begin by taking the $L^q_x$-norm 
to the term $\int_0^t\mathfrak{g}_{t-s}^\alpha *\vf(s, x) ds$
in order to obtain the estimate 
$$\left\|\int_0^t\mathfrak{g}^\alpha_{t-s} \ast \vf (s, \cdot)ds\right\|_{L^{q}} \leq  \int_0^t\|\mathfrak{g}^\alpha_{t-s}\|_{L^1} \| \vf(s, \cdot) \|_{L^{q}}ds .$$
Then, by considering Lemma \ref{lema embeddd set} we can write 
$$\left\|\int_0^t\mathfrak{g}^\alpha_{t-s} \ast \vf (s, \cdot)ds\right\|_{L^{q}} \leq  \int_0^t\|\mathfrak{g}^\alpha_{t-s}\|_{L^1} \| \vf(s, \cdot) \|_{L^{  \overline{q}(\cdot)  }}ds \leq C\| \vf\|_{L^1_t L^{
\overline{q}(\cdot)
}_x }.$$
Proceeding as in the previous lines, \emph{i.e.} taking the $L^{p(\cdot)}$-norm in the time variable and using Lemma \ref{Lemma_subset}, we get the estimates 
\begin{eqnarray*}
\left\|\int_0^t\mathfrak{g}^\alpha_{t-s} \ast \vf(s, \cdot) ds\right\|_{L^{p(\cdot)}_t(L^{q}_x)}
&\leq &
C\left\|\| \vf  \|_{L^1_t L^{\overline{q}(\cdot)}_x }\right\|_{L^{p(\cdot)}_t}
\\
&\leq &
 C\| \vf\|_{L^1_t L^{ \overline{q}(\cdot) }_x}\|1\|_{L^{p(\cdot)}([0,T])}\notag
\\
&\leq &
C \max\left\{T^{\frac{1}{p^{-}}}, T^{\frac{1}{p^{+}}}\right\}
\| \vf\|_{L^1_t L^{  \overline{q}(\cdot)  }_x}.\label{Estimation_ForceExterieure}
\end{eqnarray*}
Considering the last estimate, we conclude \eqref{Control_force_LpLq}  
and we finish the proof of Proposition \ref{prop.Control_force_LpLq}. 
\end{proof}
\begin{Proposition}\label{prop.Control_Nolineal}Let $\gamma \in \{0,1\}$ fixed.  
Consider  $\alpha\in ]\frac 1 2, 1]$, a variable exponent $ p(\cdot )\in \mPl(\Rt)$ with $b+1<p^-\leq p^+ <+\infty$,  fix an index $q>\frac{nb}{2\alpha -  \langle1\rangle_\gamma  }$ by the relationship
$\frac{\alpha b}{p(\cdot)}
+
\frac{nb}{2q}
<\alpha - \langle \frac{1}{2} \rangle_\gamma$, and $\overline{q}(\cdot)\in\mP^{\text{emb}}_q(\Rt)$.
Then, 
there exist a constant $C_{\mathcal{B}}>0$ such that
\begin{equation}\label{Control_Nolineal}
\left\|
\int_{0}^t \mathfrak{g}^\alpha_{t-s}\ast  \vn^{\gamma} (|u|^b u ) (s, x) ds
\right\|_{ E_T}\leq C_{\mathcal{B}}\|u\|^b_{ E_T}\|u\|_{ E_T}.
\end{equation}
\end{Proposition}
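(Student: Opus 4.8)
The plan is to mirror the structure of the proof of Proposition~\ref{prop.Control_Nolineal2}, but now measuring the space variable in the fixed space $L^q$ and the time variable in the variable space $L^{p(\cdot)}$. First I would apply Minkowski's integral inequality to bring the $L^q_x$-norm inside the time integral, and then estimate the spatial convolution through the $L^p$--$L^q$ decay of the fractional heat kernel in Lemma~\ref{Lemma 3.1 MIAO}. Writing the nonlinearity as an element of $L^{q/(b+1)}_x$ and using $\||u|^b u(s,\cdot)\|_{L^{q/(b+1)}}=\|u(s,\cdot)\|_{L^q}^{b+1}$, and moving the derivative onto the kernel when $\gamma=1$, both cases collapse into the single pointwise-in-time bound
\begin{equation*}
\left\| \int_0^t \mathfrak{g}^\alpha_{t-s}\ast \vn^{\gamma}(|u|^b u)(s,\cdot)\, ds \right\|_{L^q_x}\leq C \int_0^t (t-s)^{-\mu}\, \|u(s,\cdot)\|_{L^q}^{b+1}\, ds,\qquad \mu = \frac{\langle 1\rangle_\gamma}{2\alpha} + \frac{nb}{2\alpha q},
\end{equation*}
where for $\gamma=1$ I use part~(2) of Lemma~\ref{Lemma 3.1 MIAO} with $\nu=1$ and for $\gamma=0$ part~(1), in both cases with source exponent $q/(b+1)$ and target exponent $q$.

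The next step is to recognise the time integral as a one-dimensional Riesz potential. Setting $g(s)=\|u(s,\cdot)\|_{L^q}^{b+1}$ and extending it by zero outside $[0,T]$, I would bound
\begin{equation*}
\int_0^t (t-s)^{-\mu}\, g(s)\, ds \leq \int_{\R} \frac{g(s)}{|t-s|^{1-\beta}}\, ds = \mathcal{I}_{\beta}(g)(t),\qquad \beta := 1-\mu,
\end{equation*}
in the sense of Definition~\ref{Definition_RieszPotential} with $n=1$. This is legitimate because the defining relationship $\frac{\alpha b}{p(\cdot)}+\frac{nb}{2q}<\alpha-\langle \frac12\rangle_\gamma$ forces $\mu<1$, hence $\beta\in(0,1)$, so $\mathcal{I}_\beta$ is a genuine fractional integral in the time variable. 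I would then invoke the boundedness of the Riesz potential on variable Lebesgue spaces (Theorem~\ref{theo.PotentialRieszVariable0}, applied on the time line via the standard adaptation of the theory to subsets of $\R$) with source exponent $\frac{p(\cdot)}{b+1}$, which is admissible since $b+1<p^-$ guarantees $(p(\cdot)/(b+1))^->1$. This yields $\|\mathcal{I}_\beta(g)\|_{L^{r(\cdot)}}\leq C\|g\|_{L^{p(\cdot)/(b+1)}}$ with $\frac{1}{r(\cdot)}=\frac{b+1}{p(\cdot)}-\beta$.

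The role of the relationship defining $q$ is precisely to guarantee the exponent bookkeeping: rewritten as $\frac{b}{p(t)}<\beta$, it yields $\frac{1}{r(t)}<\frac{1}{p(t)}$, i.e. $p(t)\leq r(t)$ a.e. Since the time interval $[0,T]$ is bounded, the embedding Lemma~\ref{lemme_embeding} then gives $\|\mathcal{I}_\beta(g)\|_{L^{p(\cdot)}([0,T])}\leq (1+T)\,\|\mathcal{I}_\beta(g)\|_{L^{r(\cdot)}([0,T])}$. Finally, the homogeneity of the Luxemburg norm \eqref{Def_LuxNormLebesgue} gives
\begin{equation*}
\|g\|_{L^{p(\cdot)/(b+1)}([0,T])}=\bigl\|\,\|u(\cdot,\cdot)\|_{L^q}\,\bigr\|_{L^{p(\cdot)}([0,T])}^{\,b+1}=\|u\|_{E_T}^{\,b+1},
\end{equation*}
and collecting the constants yields \eqref{Control_Nolineal} with a constant $C_{\mathcal{B}}$ of the form $C(1+T)$, which is harmless for the subsequent contraction argument since the smallness there is driven by the data terms through the factors $\max\{T^{1/p^-},T^{1/p^+}\}$.

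I expect the main obstacle to be the absence of a Young convolution inequality in variable Lebesgue spaces: the Duhamel time integral cannot be treated by a direct convolution estimate, which is exactly why it must be routed through the fractional-integral/Riesz-potential machinery of Theorem~\ref{theo.PotentialRieszVariable0}. The delicate point is then the exponent accounting, namely verifying that the \emph{constant} order $\beta=1-\mu$ falls in the admissible window $\bigl(\frac{b}{p^-},\frac{b+1}{p^+}\bigr)$ dictated by the hypotheses, so that on the one hand $\mathcal{I}_\beta$ maps boundedly into $L^{r(\cdot)}$ (upper bound $\beta<(b+1)/p^+$, equivalently $r(\cdot)$ finite) and on the other hand $r(\cdot)\geq p(\cdot)$ (lower bound, supplied by the defining relationship for $q$) so that the bounded-domain embedding closes the estimate.
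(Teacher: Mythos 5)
Your argument is correct and reaches the paper's estimate, but it reorganises the key step. The opening is identical to the paper's: Minkowski, the kernel decay of Lemma \ref{Lemma 3.1 MIAO}, and H\"older in space reduce everything to the time integral $\int_0^t (t-s)^{-\mu}\|u(s,\cdot)\|_{L^q}^{b+1}\,ds$ with $\mu=\langle \tfrac{1}{2\alpha}\rangle_\gamma+\tfrac{nb}{2\alpha q}$. From there the paper does \emph{not} apply the Riesz potential to $g(s)=\|u(s,\cdot)\|_{L^q}^{b+1}$ as you do: it first invokes the norm conjugate formula \eqref{Norm_conjugate_formula}, uses Fubini to move the singular kernel onto the dual test function $\psi$, recognises $\mathcal{I}_\beta(|\psi|)$, applies a three-factor H\"older inequality $1=\tfrac{b}{p(\cdot)}+\tfrac{1}{p(\cdot)}+\tfrac{1}{\tilde p(\cdot)}$, bounds $\|\mathcal{I}_\beta(|\psi|)\|_{L^{\tilde p(\cdot)}}\leq C\|\psi\|_{L^{r(\cdot)}}$ via Theorem \ref{theo.PotentialRieszVariable0}, and finally uses $r(\cdot)<p'(\cdot)$ together with Lemma \ref{lemme_embeding} on $[0,T]$. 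Your version is the exact adjoint of this: you put $\mathcal{I}_\beta$ directly on $g$, land in $L^{r(\cdot)}$ with $\tfrac{1}{r(\cdot)}=\tfrac{b+1}{p(\cdot)}-\beta$, and come back to $L^{p(\cdot)}([0,T])$ with the same bounded-domain embedding; the numerical condition you use ($\tfrac{b}{p(\cdot)}<\beta$, hence $p(\cdot)\leq r(\cdot)$) is precisely the paper's condition $r(\cdot)<p'(\cdot)$ read through duality, and the homogeneity identity $\|\,|h|^{b+1}\|_{L^{p(\cdot)/(b+1)}}=\|h\|_{L^{p(\cdot)}}^{b+1}$ replaces the paper's three-factor H\"older. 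Both routes produce the same constant $C(1+T)$. What your arrangement buys is the elimination of the duality step; what it costs is that the hypothesis $\beta<n/(\text{source exponent})^+$ of Theorem \ref{theo.PotentialRieszVariable0} now reads $\beta<(b+1)/p^+$, which you correctly flag as the delicate point but which does not follow from the stated hypotheses alone — note, however, that the paper's dual arrangement needs exactly the same inequality for its intermediate exponent $r(\cdot)$ to satisfy $r(\cdot)\geq 1$, so this is a caveat shared with the original proof rather than a gap specific to yours.
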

\begin{proof}
We begin the proof  by considering
the $L^q_x$-norm to the term $\int_{0}^t \mathfrak{g}^\alpha_{t-s}\ast  \vn^{\gamma} (|u|^b u ) (s, x) ds$ in order  
to obtain 
$$\left\|
\int_{0}^t \mathfrak{g}^\alpha_{t-s}\ast  \vn^{\gamma} (|u|^b u ) (s, x) ds
\right\|_{L^q}\leq C\int_{0}^t\left\|
\mathfrak{g}^\alpha_{t-s}\ast  \vn^{\gamma} (|u|^b u ) (s, \cdot)
\right\|_{L^q}ds.$$
From this inequality we can conclude 
$$\displaystyle \left\|\int_{0}^t \mathfrak{g}^\alpha_{t-s}\ast  \vn^{\gamma} (|u|^b u ) (s, x) ds
\right\|_{L^q}\leq 
\begin{cases}
   \displaystyle C\int_0^t \left\|\mathfrak{g}_{t-s}^\alpha * (|u|^b u ) (s, \cdot)\right\|_{L^q} ds & \text{if } \gamma=0,
    \\[11pt]
   \displaystyle C\int_0^t \left\|\vn \mathfrak{g}_{t-s}^\alpha *(|u|^b u ) (s, \cdot)\right\|_{L^q} ds
   & \text{if } \gamma=1.
\end{cases}
$$
Note that, Lemma \ref{Lemma 3.1 MIAO}  yields the estimate
\begin{eqnarray*}
\int_0^t \left\|\vn \mathfrak{g}_{t-s}^\alpha * u\otimes u(s, \cdot)\right\|_{L^q} ds
&\leq&
  \displaystyle  C\int_0^t  
 \frac{1}{
 (t-s)^{
\langle
 \frac{1}{2\alpha}
\rangle_\gamma 
 +\frac{nb}{2\alpha q}
 }
 }
\| |u|^b u (s,\cdot)\|_{L^{ \frac{q}{b+1} }}   ds.
\end{eqnarray*}
Thus, considering an H\"older   inequality  we obtain 
 \begin{eqnarray*}
\int_0^t \left\|\vn \mathfrak{g}_{t-s}^\alpha * u\otimes u(s, \cdot)\right\|_{L^q} ds
&\leq&
  \displaystyle  C\int_0^t  
 \frac{1}{
 (t-s)^{
    \langle \frac{1}{2\alpha}\rangle_\gamma
 +\frac{nb}{2\alpha q}
 }
 }
\|u(s,\cdot)\|_{L^{q}}^b
\|u(s,\cdot)\|_{L^{q}} ds.
\end{eqnarray*}
Now, considering the $L^{p(\cdot)}_t$-norm
and the norm conjugate formula (\ref{Norm_conjugate_formula}), provided with the 
 variable exponent $p'(\cdot)$ defined by $1=\frac{1}{p(\cdot)}+\frac{1}{p'(\cdot)}$, 
we get 
\begin{eqnarray}
\left\|\int_{0}^t \mathfrak{g}^\alpha_{t-s}\ast  \vn^{\gamma} (|u|^b u ) (s, x) ds
\right\|_{L_t^{p(\cdot)}(L_x^q)}
&\leq&
\left\|\int_0^t 
\frac{1}{
 (t-s)^{
    \langle \frac{1}{2\alpha}\rangle_\gamma
 +\frac{nb}{2\alpha q}
 }
 }
\left\|
u(s,\cdot)
\right\|_{L^{q}}^{b+1} ds\right\|_{L_t^{p(\cdot)}([0,T])}\label{Identite_NormeDualite}
\\
&\leq &
\sup_{
\| \psi \|_{L^{p'(\cdot)}}\leq 1
} 
\int_0^{T} \int_0^t 
\frac{|\psi(t)|}{
|t-s|^{
    \langle \frac{1}{2\alpha}\rangle_\gamma
 +\frac{nb}{2\alpha q}
}}
\left\|u(s,\cdot)\right\|_{L^{q}}^{b+1}
ds \,dt.\notag
\end{eqnarray}
Note that, considering Fubini's Theorem, we can write 
\begin{multline*}
    \sup_{\| \psi \|_{L^{p'(\cdot)}}\leq 1}
\int_0^{T} \int_0^t 
\frac{|\psi(t)|}{
|t-s|^{
    \langle \frac{1}{2\alpha}\rangle_\gamma
 +\frac{nb}{2\alpha q}
}}\|u(s,\cdot)
\|^{b}_{L^q}
\|u(s,\cdot)
\|_{L^q}
ds \,dt
\\=
\sup_{\| \psi \|_{L^{p'(\cdot)}}\leq 1}
\int_0^T\int_0^T
\frac{1_{ \{ 0<s<t \} } |\psi(t)| }{|t-s|^{
    \langle \frac{1}{2\alpha}\rangle_\gamma
 +\frac{nb}{2\alpha q}
}}
dt
\|u(s,\cdot)
\|^{b}_{L^q}
\|u(s,\cdot)
\|_{L^q}
ds,
\end{multline*}
Now, extending the function $\psi(t)$ by zero 
on $\mathbb{R} \setminus [0,T]$, we obtain 
\begin{multline*}
\sup_{\| \psi \|_{L^{p'(\cdot)}}\leq 1}
\int_0^{T} \int_0^t 
\frac{|\psi(t)|}{|t-s|^{
    \langle \frac{1}{2\alpha}\rangle_\gamma
 +\frac{nb}{2\alpha q}
}}
\|u(s,\cdot)
\|^{b}_{L^q}
\|u(s,\cdot)
\|_{L^q}
ds \,dt
\\ =
\sup_{\| \psi \|_{L^{p'(\cdot)}}\leq 1}
\int_0^T
\left(\int_{-\infty}^{+\infty}\frac{|\psi(t)| }{
|t-s|^{
    \langle \frac{1}{2\alpha}\rangle_\gamma
 +\frac{nb}{2\alpha q}
}
}
dt
\right)
\|u(s,\cdot)
\|^{b}_{L^q}
\|u(s,\cdot)
\|_{L^q}
ds
\\
=\sup_{\| \psi \|_{L^{p'(\cdot)}}\leq 1}\int_0^T\mathcal{I}_\beta(|\psi|)(s)\|u(s,\cdot)
\|^{b}_{L^q}
\|u(s,\cdot)
\|_{L^q}ds,
\end{multline*}
where $\mathcal{I}_\beta$ is the 1D Riesz potential   with $\beta=1-  \langle \frac{1}{2\alpha}\rangle_\gamma
 -\frac{nb}{2\alpha q}
<1$ (see Definition \ref{Definition_RieszPotential}). 
\begin{Remarque}
Note that the constraints  $\frac{nb}{2\alpha -  \langle1\rangle_\gamma  }<q$ and $\alpha \in ] \frac 1 2, 1]$ imply $0<1-  \langle \frac{1}{2\alpha}\rangle_\gamma
 -\frac{nb}{2\alpha q}<1$, and then the Riesz potential considered is well defined.
\end{Remarque}
Considering an H\"older inequality with $1=\frac{b}{p(\cdot)}+\frac{1}{p(\cdot)}+\frac{1}{ \tilde{p}(\cdot)}$, we get the estimate 
\begin{multline*}
    \sup_{\| \psi \|_{L^{p'(\cdot)}}\leq 1}\int_0^T\mathcal{I}_\beta(|\psi|)(s)
\|u(s,\cdot)
\|^{b}_{L^q}
\|u(s,\cdot)
\|_{L^q}ds
\\
\leq
C\sup_{\| \psi \|_{L^{p'(\cdot)}}
\leq 1}\|\mathcal{I}_\beta(|\psi|)\|_{L_t^{\tilde{p}(\cdot)}}
\Big\|\|u(\cdot,\cdot)\|_{L_x^q}\Big\|^b_{L_t^{p(\cdot)}}
\Big\|\|u(\cdot,\cdot)\|_{L_x^q}\Big\|_{L_t^{p(\cdot)}}
.
\end{multline*}
\begin{Remarque} 
We must stress the fact that condition $p^->b+1$ in the statement of the proposition become from the  relationship $1=\frac{b+1}{p(\cdot)}+\frac{1}{ \tilde{p}(\cdot)}$ .
\end{Remarque}
Now, provided of  Theorem \ref{theo.PotentialRieszVariable0} with indexes defined by the relationship
\begin{equation}\label{Indices_Riesz}
\frac{1}{\tilde{p}(\cdot)}
=
\frac{1}{r(\cdot)}-
\left(
1-  \langle \frac{1}{2\alpha}\rangle_\gamma
 -\frac{nb}{2\alpha q}
\right),
\end{equation}
we conclude the estimate 
\begin{multline}
\sup_{\| \psi \|_{L^{p'(\cdot)}}\leq 1}\|\mathcal{I}_\beta(|\psi|)\|_{L_t^{\tilde{p}(\cdot)}}\|u(\cdot,\cdot)\|_{L_t^{p(\cdot)}(L_x^q)}^{b}
\|u(\cdot,\cdot)\|_{L_t^{p(\cdot)}(L_x^q)}
\\
\leq C\sup_{\| \psi \|_{L^{p'(\cdot)}}\leq 1}\|\psi\|_{L_t^{r(\cdot)}}
\|u(\cdot,\cdot)\|^b_{L_t^{p(\cdot)}(L_x^q)}\|u(\cdot,\cdot)\|_{L_t^{p(\cdot)}(L_x^q)}.\label{Estimation_AvantInclusion}
\end{multline}
By hypothesis we know that $\frac{\alpha b}{p(\cdot)}
+
\frac{nb}{2q}
<\alpha - \langle \frac{1}{2} \rangle_\gamma$, then from relationship  \eqref{Indices_Riesz} and the expressions  
$$
\frac{1}{\tilde{p}(\cdot)}=1-\frac{b+1}{p(\cdot)}
\quad \text{and} \quad 
\frac{1}{p'(\cdot)}=1-\frac{1}{p(\cdot)},
$$ 
 we deduce $r(\cdot)<p'(\cdot)$. 
 Thus, by
Considering Lemma \ref{lemme_embeding} (with $r(\cdot)<p'(\cdot)$ and $\Omega= [0,T]$ ), from (\ref{Estimation_AvantInclusion}) we obtain 
\begin{eqnarray*}
\sup_{\| \psi \|_{L^{p'(\cdot)}}\leq 1}\|\psi\|_{L_t^{r(\cdot)}}\|u(\cdot,\cdot)\|^b_{L_t^{p(\cdot)}(L_x^q)}\|u(\cdot,\cdot)\|_{L_t^{p(\cdot)}(L_x^q)}\qquad\qquad\qquad\qquad\qquad\qquad\\
\leq \sup_{\| \psi \|_{L^{p'(\cdot)}}\leq 1}(1+T)\|\psi\|_{L_t^{p'(\cdot)}}\|u(\cdot,\cdot)\|^b_{L_t^{p(\cdot)}(L_x^q)}\|u(\cdot,\cdot)\|_{L_t^{p(\cdot)}(L_x^q)}\\
\leq (1+T)\|u(\cdot,\cdot)\|^b_{L_t^{p(\cdot)}(L_x^q)}\|u(\cdot,\cdot)\|_{L_t^{p(\cdot)}(L_x^q)}.
\end{eqnarray*}
Gathering these last estimates with (\ref{Identite_NormeDualite}), we conclude 
\begin{equation}\label{Estimation_ApplicationBilineaire}
\left\|\int_{0}^t \mathfrak{g}^\alpha_{t-s}\ast  \vn^{\gamma} (|u|^b u ) (s, x) ds
\right\|_{L_t^{p(\cdot)}(L_x^q)}
\leq C(1+T)\|u(\cdot,\cdot)\|^b_{L_t^{p(\cdot)}(L_x^q)}\|u(\cdot,\cdot)\|_{L_t^{p(\cdot)}(L_x^q)}.
\end{equation}
Considering this last estimate, we deduce the inequality \eqref{Control_Nolineal} and we finish the proof. 

\end{proof}

\subsection*{End of the proof of Theorem \ref{Theoreme_1}}
Motivated by the integral equation \eqref{NS_Integral}, we consider the operator $\Phi$ defined  by the expression
\begin{equation*}
    \Phi (u)
    :=
    \mathfrak{g}^\alpha_t\ast u_0(x)
+
\int_{0}^t
\mathfrak{g}^\alpha_{t-s}\ast\vf(s, x)
ds
-
\int_{0}^t
\mathfrak{g}^\alpha_{t-s}\ast 
\vn^{\gamma} (|u|^b u )
(s, x)
ds,
\end{equation*}
which maps $ E_T$ into itself. Now, denoting by 
\begin{equation*}
 {\color{black}   B = C\max\left\{T^{\frac{1}{p^{-}}}, T^{\frac{1}{p^{+}}}\right\} 
\left(
\|u_0\|_{L^{\overline{q}} (\Rt)}
    +
\| \vf\|_{L^1_t(L^{\overline{q}}_x)} 
\right),
}
\end{equation*}
and setting $ {\color{black}R= 2B}$,  we introduce  the ball 
\begin{equation*}
    B_{R}
    =
    \left\{
     u \in  E_T \, :\,
     \|
    u
     \|_{
 E_T
    }
    \leq R
    \right\}.
\end{equation*}
Then, similarly to the strategy considered in the proof of Theorem \ref{Theoreme_2}, in the following we will prove that if $B$ is bounded by a suitable constant, then the operator $\Phi$ is a contraction map on the ball $B_R$. 
To continue, we consider a fixed parameter $\gamma \in \{0,1\}$ and $u,v\in B_R$. Then, by the result obtained in Proposition
 \ref{prop.Control_Nolineal}, we obtain the following estimate
\begin{equation}\label{26fevr1}
    \|\Phi (u)
    -
    \Phi ( v) \|_{
 E_T
    }
    \leq 
{\color{black}    C
    \| u- v \|_{E_T}
    \left(
 \|
    u
     \|^b_{
 E_T
    }
    +
     \|
    v
     \|^b_{
 E_T
    } 
    \right)}
    \leq 
    C R^b
    \| u- v \|_{E_T}  
.
\end{equation}
With this information at hand, and considering  Propositions 
 \ref{prop.Control_Uo_Lplq} and \ref{prop.Control_force_LpLq}, we can write  
\begin{equation}\label{26fevr2}
\begin{split}
    \|\Phi (u)
 \|_{
{ E_T}}
&
    \leq 
    \|\Phi (u) - \Phi (0) + \Phi (0)
 \|_{
{ E_T}}
\\ &
    \leq 
    \|\Phi (u) - \Phi (0) \|_{E_T}
+
    \| \Phi (0)
 \|_{
{ E_T}}
\\ &
\leq   
CR^b \| u \|_{E_T}
+
C\max\left\{T^{\frac{1}{p^{-}}}, T^{\frac{1}{p^{+}}}\right\} 
\left(
\|u_0\|_{L^{\overline{q}} (\Rt)}
    +
\| \vf\|_{L^1_t(L^{\overline{q}}_x)} 
\right)
\\ &
=   CR^b  \| u \|_{E_T} +
B. 
    \end{split}
\end{equation}
At this point, we must stress that provided with $2CB \leq \frac{1}{2}$, we get $CR^b \leq  \frac{1}{2} $,  then considering \eqref{26fevr1} and \eqref{26fevr2}, we obtain
\begin{equation*}
        \|\Phi (u) - \Phi ( v) \|_{E_T} 
\leq
\frac{1}{2}
 \|
    u
    -
     v \|_{
 E_T
    }
\quad \text{and} \quad
        \|\Phi (u)
    \|_{
E_T} 
\leq
R. 
\end{equation*}
Thus, considering the contraction mapping principle (see Proposition \ref{cmp}), it follows that there exists a unique local-in-time mild solution $u \in  E_T$ of the equation \eqref{NS_Intro}. 
The proof of Theorem \ref{Theoreme_1} is  concluded.


\paragraph{\bf Acknowledgements.} 
The author warmly thanks Pierre-Gilles Lemarié-Rieusset and Diego Chamorro for their helpful comments and advises. The author is supported by the ANID postdoctoral program BCH 2022 grant No. 74220003.

	
	\bibliographystyle{siam}
	\bibliography{pdBIBLIO}

\end{document}